\setlist[enumerate]{font={\rm},itemsep=0.2\baselineskip}
\setlist[enumerate,1]{label={(\roman*)}}
\setlist[enumerate,2]{label={(\arabic*)}}
\newtheorem{theorem}{Theorem}[section]
\newtheorem{proposition}[theorem]{Proposition}
\newtheorem{lemma}[theorem]{Lemma}
\theoremstyle{definition}
\newtheorem{construction}[theorem]{Construction}
\newtheorem*{conj*}{Conjecture}
\newtheorem{problem}[theorem]{Problem}
\newtheorem*{problem*}{Problem}
\newtheorem{hypothesis}[theorem]{Hypothesis}
\theoremstyle{remark}
\def\GL{{\mathrm{GL}}}
\def\GammaL{{\Gamma \mathrm{L}}}
\def\AGL{{\mathrm{AGL}}}
\def\SL{{\mathrm{SL}}}
\def\PSL{{\mathrm{PSL}}}
\def\Sp{{\mathrm{Sp}}}
\def\PSU{{\mathrm{PSU}}}
\def\G{{\mathrm{G}}}
\def\ASL{{\mathrm{ASL}}}
\def\A{{\mathrm{A}}}
\def\H{{\mathrm{H}}}
\def\M{{\mathrm{M}}}
\def\P{{\mathrm{P}}}
\def\bbZ{{\mathbb{Z}}}
\def\bbF{{\mathbb{F}}}
\def\Sym{{\mathrm{Sym}}}
\def\bfN{{\mathbf{N}}}
\def\Gal{{\mathrm{Gal}}}
\def\leqs{\leqslant}
\def\geqs{\geqslant}
\def\Magma{{\sc Magma}}
\def\normeq{\trianglelefteqslant}
\def\calH{{\mathcal H}}
\title{Finite imprimitive rank $3$ affine groups -- I}
\author{Cai Heng Li}
\address{SUSTech International Center for Mathematics, and Department of Mathematics, Southern University of Science and Technology, Shenzhen, Guangdong, China}
\email{lich@sustech.edu.cn {\text{\rm(Li)}}}
\author{Luyi Liu}
\address{SUSTech International Center for Mathematics, Southern University of Science and Technology, Shenzhen, Guangdong, China}
\email{12031108@mail.sustech.edu.cn {\text{\rm(Liu)}}}
\author{Hanyue Yi}
\address{Department of Mathematics, Southern University of Science and Technology, Shenzhen, Guangdong, China}
\email{12431017@mail.sustech.edu.cn {\text{\rm(Yi)}}}
\author{Yan Zhou Zhu}
\address{Department of Mathematics, Southern University of Science and Technology, Shenzhen, Guangdong, China}
\email{zhuyz@mail.sustech.edu.cn {\text{\rm(Zhu)}}}
\begin{document}

\begin{abstract}
    This is one of a series of papers which aims to classify imprimitive affine groups of rank $3$.
    In this paper, a complete classification is given of such groups of characteristic $p$ whose the point stabilizers are not $p$-local, which shows that such groups are very rare; Specifically, the only examples are the two non-isomorphic groups of the form $2^4{:}\GL_3(2)$ with a unique minimal normal subgroup.
\end{abstract}

\maketitle

\section{Introduction}

The {\it rank} of a transitive permutation group $X\leqs\Sym(\Omega)$ is the number of orbits of $X$ acting on $\Omega\times \Omega$, which is equal to the number of $X_\omega$-orbits on $\Omega$, where $\omega\in\Omega$.
The study of finite rank $3$ groups dates back to the work of Higman~\cite{higman1964Finite} in the 1960s, and has received considerable attention since then, leading to the following long-standing problem.

\begin{problem}\label{prob-1}
	{\rm Classify finite permutation groups of rank $3$.}
\end{problem}

Indeed some important classes of rank $3$ permutation groups have been classified and well-characterized, refer to~\cite{bannai1971Maximal,foulser1969Solvable,kantor1979Permutation,kantor1982Rank,liebeck1987affine,liebeck1986finite} for the primitive case, \cite{devillers2011imprimitive} for the quasiprimitive case, \cite{baykalov2023Rank} for the innately transitive case.
(Recall that a permutation group is called {\it innately transitive} if it has a transitive minimal normal subgroup.)
Our study of Problem~\ref{prob-1} in~\cite{Gross-conj} has led to a solution of Gross' conjecture regarding the so-called $2$-automorphic $2$-groups, with some new families of rank $3$ groups determined.
The significant outcomes achieved have many important applications to various combinatorial objects, including partial linear spaces \cite{bamberg2021Partial,devillers2005classification,devillers2008classification} and combinatorial designs \cite{biliotti2015designs,dempwolff2001Primitive,dempwolff2004Affine,dempwolff2006Symmetric}.

Recently, a reduction theorem for the classification of general imprimitive rank $3$ groups was recently established in~\cite{huang2025finite}.
It divides the unclassified imprimitive rank $3$ groups into three types.
One major type consists of subgroups of the holomorph of a finite group.
Specifically, when the finite group is an elementary abelian $p$-group, such subgroups are called \textit{affine groups}.
This series of papers aims to classify all rank $3$ affine groups.

Now we give a formal definition of affine groups.
Let $V$ be an $n$-dimensional vector space over $\bbF_p$ with prime $p$.
An affine group $X$ is defined as
\[X=V{:}G\leqslant V{:}\GL(V)\cong p^n{:}\GL_n(p),\]
where $V$ acts on itself by translations and $G\leqslant\GL(V)$ acts by linear transformations.
The rank of $X$ is then equal to the number of orbits of $G$ on $V$.
Hence $X$ has rank $3$ if and only if $G$ has exactly $3$ orbits on $V$.
The affine primitive permutation groups of rank $3$ were classified by Foulser~\cite{foulser1969Solvable} in the solvable case and Liebeck~\cite{liebeck1987affine} in general.
Assume that $G$ is reducible, so that $G$ stabilizes a $d$-subspace $W$ of $V$ with $1\leqs d<n$.
Then there exists a subspace $U<V$ such that $V=W\oplus U$, and
\[G\leqs Q{:}(\GL(W)\times\GL(U))=p^{d(n-d)}{:}(\GL_d(p)\times\GL_{n-d}(p)).\]
Observe that, if $O_p(G)=1$, then $G\lesssim \GL(W)\times\GL(U)$; and if $G= \GL(W)\times\GL(U)$, then $G$ has exactly the following $4$ orbits on $V$:
\[\{0\},\ W\setminus\{0\},\ U\setminus\{0\}, \mbox{ and } V\setminus(W\cup U).\]
Naturally, one would ask {\it if there exist subgroups $G<Q{:}(\GL(W)\times\GL(U))$ such that $O_p(G)=1$ and $G$ has exactly $3$ orbits on $V$?}

We say $G$ is \textit{$p$-local} if $O_p(G)\neq 1$.
If $G$ is $p$-local, then the center of $V{:}O_p(G)$ admits a $G$-invariant subspace of $V$.
Then it follows that $G$ is reducible on $V$ if $O_p(G)\neq 1$, see also~\cite[V. 5.17 Satz]{huppert1967Endliche}.
In this paper, we obtain a complete classification of rank $3$ imprimitive affine groups of characteristic $p$ such that the point stabilizer is not $p$-local.
The outcome shows that such groups are very rare, which is slightly surprising.

\begin{theorem}\label{thm:nonplocal}
Let $G\leqslant\GL(V)=\GL_n(p)$ be reducible with $O_p(G)=1$.
Then $G$ has exactly $3$ orbits on $V=\bbF_p^n$ if and only if $V=\bbF_2^4$ and $G\cong\GL_3(2)$ is conjugate to one of the two subgroups of $\GL_4(2)$ defined in Construction~$\ref{exam:two}$.
\end{theorem}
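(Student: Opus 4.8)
The plan is to treat the forward implication, since the ``if'' direction is just a verification that the two groups of Construction~\ref{exam:two} have the advertised orbit lengths. We reduce the problem to a statement about transitive linear groups and then work through Hering's classification.

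\textbf{Setup and reduction.} Since $G$ is reducible, fix a $G$-invariant subspace $W$ of dimension $d$ with $1\leqs d<n$ and a complement $U$ with $V=W\oplus U$; set $e=n-d$. As $G\cap Q$ is a normal $p$-subgroup of $G$ it is trivial, so every element of $G$ is block upper triangular with respect to $W\oplus U$; write $\alpha\colon G\to\GL(W)$ and $\beta\colon G\to\GL(V/W)$ for the induced actions, with images $G_W$ and $G_U$, and let $\gamma$ be the resulting $1$-cocycle on $G$ valued in $\Hom(U,W)$. Then $G$ embeds as a subdirect product of $G_W$ and $G_U$, with common quotient $G_W/N_W\cong G_U/N_U=:T$ by Goursat. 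Next, applying the $G$-map $V\to V/W$: the at most three $G$-orbits on $V$ surject onto those on $V/W$, and the preimage $W$ of the fixed zero coset already meets the two orbits $\{0\}$ and $W\setminus\{0\}$; hence $V/W$ has exactly one nonzero $G$-orbit, $W$ meets exactly two, and $V\setminus W$ is a single $G$-orbit of length $p^d(p^e-1)$. Equivalently, $G_U$ is transitive on $U\setminus\{0\}$, $G_W$ is transitive on $W\setminus\{0\}$, and $G$ is transitive on $V\setminus W$. So $G_W$ and $G_U$ appear on Hering's list of transitive linear groups.

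\textbf{Constraints and elimination.} The argument will hinge on three facts: \emph{(a) divisibility}, $p^d(p^e-1)\mid|G|=|N_W|\,|N_U|\,|T|$, where $|N_W|=|G_W|/|T|$ and the $p$-part of $|G_W|$ divides $p^{\binom{d}{2}}$ (similarly for $U$), so in particular $d\leqs\binom{d}{2}+\binom{e}{2}$; \emph{(b) essentiality of the cocycle}: if $[\gamma]=0$ in $H^1(G,\Hom(U,W))$ then $U$ may be chosen $G$-invariant and $V$ has the four orbits $\{0\},W\setminus\{0\},U\setminus\{0\},V\setminus(W\cup U)$, so necessarily $H^1(G,\Hom(U,W))\neq0$; and \emph{(c) affine transitivity}: the coset-stabiliser $H\leqs G$, of index $p^e-1$, acts transitively on the $p^d$ points of a nonzero $W$-coset as an affine group with irreducible linear part, which (as this part fixes $0$) forces $H$ to move the basepoint. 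We then go through Hering's list for $(G_W,G_U)$: the $\GammaL_1$-type entries obey $|G_W|\leqs d(p^d-1)$ and similarly, so (a) confines $d,e,p$ to a small range; the large (near-)quasisimple families ($\SL_a$, $\Sp_a$, $G_2$) are eliminated via (b) using the known vanishing of $H^1$ of the relevant natural or dual module outside a short explicit list, after which (a) and (c) dispose of the survivors (for example $H^1(\Sp_4(2),\mathrm{nat})\neq0$, but in the associated non-split module $V\setminus W$ splits into two orbits of length $15$); the finitely many sporadic entries are checked directly, e.g.\ with \Magma. The only surviving case is $p=2$, $\{d,e\}=\{1,3\}$, with the $3$-dimensional factor $\SL_3(2)=\GL_3(2)$ --- whose natural and dual modules are exactly the transitive linear modules having $H^1$ of dimension $1$ --- and the $1$-dimensional factor trivial. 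Reconstructing $V$ as the corresponding non-split extension yields, up to conjugacy, the two subgroups of $\GL_4(2)$ of Construction~\ref{exam:two}: one with $V$-socle of dimension $3$ and orbit lengths $1,7,8$, one with $V$-socle of dimension $1$ and orbit lengths $1,1,14$; both give abstract groups $2^4{:}\GL_3(2)$ with a unique minimal normal subgroup, and they are non-isomorphic. Transitivity of $\GL_3(2)$ on $V\setminus W$ holds because $\GL_3(2)$ is simple and therefore has no subgroup of index $2$, so the vector-stabiliser inside the coset-stabiliser drops to index $8$ (resp.\ $14$).

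\textbf{Main obstacle.} The hard part will be the elimination step: finding a \emph{uniform} way to dismiss Hering's infinite families, combining $H^1$-vanishing with the $p$-part/divisibility bound, so that only $\SL_3(2)$ survives --- and then, because $H^1\neq0$ is necessary but not sufficient, carrying out the extra transitivity analysis on $V\setminus W$ needed to kill residual cases like $\Sp_4(2)$ and to pin down the two genuine examples. Bookkeeping which extension module arises and confirming the two affine groups are non-isomorphic will be comparatively routine.
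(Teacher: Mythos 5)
Your high-level strategy coincides with the paper's: reduce to the orbit pattern $\{0\}$, $W\setminus\{0\}$, $V\setminus W$, note that $G^W$ and $G^{V/W}$ are transitive linear groups, use non-vanishing of $\H^1(G,\Hom(V/W,W))$ and the existence of an index-$p^d$ subgroup of the coset stabiliser as necessary conditions, and finish finitely many residual cases by computer; the final answer is also correctly identified. The gaps are in the elimination step, which you rightly flag as the main obstacle but which your three criteria do not actually carry out. First, the module whose first cohomology controls criterion (b) is $Q\cong W\otimes(V/W)^*$, and this is ``the natural or dual module'' of one factor only after you know that the other tensor factor is acted on trivially. You never establish that one of the kernels $G_{(W)}$, $G_{(V/W)}$ must be trivial; the paper does this (Lemma~\ref{lem:kernel}) by a K\"unneth-formula computation showing $\H^1(G_{(W)}\times G_{(V/W)},W\otimes(V/W)^*)=0$ whenever both kernels are nontrivial, and it must then still dispose of the doubly-faithful case, where $W\otimes(V/W)^*$ is a genuine tensor product of two faithful modules of equal dimension (equality of dimensions itself needs Proposition~\ref{prop:transList}); there the paper invokes the VIGRE theorem on $\H^1$ of simple modules to force $p\in\{2,3\}$ before computing. ``Known vanishing of $\H^1$ of the natural or dual module'' says nothing about these tensor-product modules.

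Second, for the infinite families in characteristic $2$ your criteria (b) and (c) are jointly insufficient. One has $\H^1(\Sp_{2m}(2^f),V_{\mathrm{nat}})\neq 0$ for all $m\geqs 2$, so (b) eliminates no symplectic group with $\dim W=1$; and (c) fails too for, e.g., $\Sp_6(2)$ with $d=1$, whose vector stabiliser $2^{1+4}{:}\Sp_4(2)$ does have a subgroup of index $2$, and more seriously for groups $G$ with $G^{(\infty)}$ defined over $\bbF_{2^f}$, $f\geqs 2$, where index-$2$ subgroups of $G_{\overline v}$ can come from field automorphisms. The paper's engine here is Lemma~\ref{lem:fieldAuto}: a Hilbert-90 conjugation argument showing that, after replacing $G$ by a conjugate, the Sylow $p$-part of $G_{(W)}/G^{(\infty)}$ fixes a coset pointwise, so the transitive action on $v+W$ must be realised inside $G^{(\infty)}$; this forces the point stabilisers $q^{1+2a}{:}((q-1)\times\Sp_{2m-2}(q))$, $[2^{5f}]{:}\SL_2(2^f)$ and $p^{(m-1)f}{:}\SL_{m-1}(p^f)$ to have normal subgroups of $p$-power index, which is what actually kills the infinite families (with a separate explicit matrix computation for $\SL_2(2^f)$). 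Your sketch has nothing playing this role, and ``(a) and (c) dispose of the survivors'' is not substantiated. A smaller point: your verification that the two examples are transitive on $V\setminus W$ via ``$\GL_3(2)$ has no subgroup of index $2$'' does not work --- the relevant coset stabiliser is $\S_4$, which has an index-$2$ subgroup, and the decompositions $14=7+7$ and $8=7+1$ must be excluded; the paper instead uses the $2$-transitivity of $\A_7$ on the $15$ nonzero vectors and on the $15$ hyperplanes.
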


The layout of the paper is as follows.
The two subgroups of $\GL_4(2)$ stated in Theorem~\ref{thm:nonplocal} will be described in detail in Section~\ref{sec:example}.
In Section~\ref{sec:reducible}, we will collect some preliminary results on reducible linear groups which will be used in the proof of Theorem~\ref{thm:nonplocal} in Section~\ref{sec:proof}.

\subsection*{Notation}
    Suppose that $G$ is a group acting on a set $\Omega$ and $\Sigma\subseteq \Omega$.
    Then we write $G_{(\Sigma)}$ and $G_\Sigma$ for the pointwise and setwise stabilizers of $\Sigma$ in $G$, respectively.
    The induced permutation group of $G$ on $\Omega$ is denoted $G^\Omega$.
    We use $G^{(\infty)}$ for the unique perfect group in the derived series of $G$.
    Denote by $O_p(G)$ the largest normal $p$-subgroup of $G$.

    For an $\bbF G$-module $V$, $V^*$ denotes the dual of $V$.
    Moreover, $\M_{n\times m}(\bbF)$ stands for the set of $(n\times m)$-matrices over $\bbF$.
    Denote by $U\otimes W$ the tensor product of $U$ and $W$.
    For $g\in G$ and $v\in V$, the image of $v$ under the action of $g$ is denoted by $v^g$.
    We write $\bbF^n$ for the vector space of column vectors of dimension $n$ over $\bbF$.
    When we say the matrix form of $g$ on $V$, we mean that the matrix $\M(g)$ represents the left multiplication on $\bbF^n\cong V$.

\subsection*{Acknowledgments}
The authors are grateful to the anonymous referees for their valuable comments and suggestions that have helped to improve the paper.

\section{Examples}\label{sec:example}

In this section, we explicitly describe the two imprimitive rank $3$ affine groups arising in Theorem~\ref{thm:nonplocal}.
These two groups both act on $V=\bbF_2^4$, and their stabilizers are both isomorphic to $\GL_3(2)<\GL_4(2)$.
We remark that these two groups were first observed in~\cite[Example 6.3]{huang2025finite}, and their stabilizers $G_1$ and $G_2$ have the following matrix forms:
\[\begin{aligned}
    G_1&=\left\langle\left(\begin{array}{c:ccc}1&1&0&0\\\hdashline 0&1&0&0\\0&0&1&1\\0&0&0&1\end{array}\right),\left(\begin{array}{c:ccc}1&0&0&0\\\hdashline 0&0&0&1\\0&1&0&0\\ 0&0&1&0\end{array}\right)\right\rangle, \\
    G_2&=\left\langle\left(\begin{array}{ccc:c}1&1&0&0\\0&1&0&0\\0&0&1&1\\\hdashline 0&0&0&1\end{array}\right),\left(\begin{array}{ccc:c}0&0&1&0\\1&0&0&0\\0&1&0&0\\\hdashline 0&0&0&1\end{array}\right)\right\rangle.
\end{aligned}\]
Here, we give a construction for these two groups from the exceptional $2$-transitive actions of $\A_7\leqs \A_8\cong \GL_4(2)$ of degree $15$.

\begin{construction}\label{exam:two}
    For the vector space $V=\bbF_2^4$, let $V^\sharp=V\setminus\{0\}$ and let $\calH$ be the set of the $15$ hyperplanes of $V$.
    Then $\GL(V)=\GL_4(2)\cong \A_8$ is $2$-transitive on both $V^\sharp$ and $\calH$.
    Let $H<\GL(V)=\GL_4(2)$ such that $H\cong\A_7$ ($<\A_8$).
    Then $H$ is also $2$-transitive on both $V^\sharp$ and $\calH$.
    We note that $H$ is stabilized by a graph automorphism $\gamma$, see~\cite[Proposition 4.7.3(i)]{bray2013maximal}.
    Let $v\in V^\sharp$.
    Define 
    \[
        \mbox{$G_1=H_v$, the stabilizer of $v\in V^\sharp$ in $H$, and $G_2=G_1^\gamma<H^\gamma$.}
    \]
    
    Note that $G_1$ is the stabilizer of a $1$-dimensional subspace $W=\{0,v\}$ in $H$, and $G_2$ is the stabilizer of a hyperplane $U\in\calH$ in $H$.\qed
\end{construction}

The following lemma uses the notation introduced in Construction~\ref{exam:two}.
\begin{lemma}\label{prop:two}
    The following statements hold.
    \begin{enumerate}
        \item $G_1\cong\GL_3(2)$ has the following $3$ orbits on $V$:
        \[\mbox{$\{0\}$, $\{v\}$ and $V\setminus \{0,v\}$ (lengths $1$, $1$ and $14$, respectively).}\]
        \item $G_2\cong\GL_3(2)$ has the following $3$ orbits on $V$:
        \[\mbox{$\{0\}$, $U\setminus\{0\}$ and $V\setminus U$ (lengths $1$, $7$ and $8$, respectively).}\]
    \end{enumerate}
\end{lemma}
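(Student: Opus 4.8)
The plan is to read off both statements from the two $2$-transitive actions of $H\cong\A_7$ --- on $V^\sharp$ and on $\calH$ --- recorded in Construction~\ref{exam:two}. Since $|H|=|\A_7|=2520$ and $H$ is transitive on each of the $15$-element sets $V^\sharp$ and $\calH$, both $G_1=H_v$ and $G_2=G_1^\gamma$ have order $2520/15=168$, and, being subgroups of index $15$ in $\A_7$, they are isomorphic to $\GL_3(2)\cong\PSL_2(7)$. Part~(i) is then immediate: $2$-transitivity of $H$ on $V^\sharp$ makes $G_1$ transitive on $V^\sharp\setminus\{v\}$, a set of $14$ vectors, while $G_1$ fixes $0$ and $v$; hence the $G_1$-orbits on $V$ are precisely $\{0\}$, $\{v\}$ and $V\setminus\{0,v\}$, of lengths $1,1,14$. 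In module terms, $W=\langle v\rangle$ is a trivial $G_1$-submodule, $V/W$ is a faithful (hence irreducible) $3$-dimensional $G_1$-module --- the action being faithful because its kernel is a normal subgroup of the simple group $G_1$ lying in a group of order $8$ --- and the extension $0\to W\to V\to V/W\to0$ is \emph{nonsplit}: a complement $S$ to $W$ would likewise be a faithful $3$-dimensional $G_1$-module, on which $G_1$ is then transitive on the $7$ nonzero vectors, so $V\setminus\{0,v\}=S^\sharp\cup(v+S^\sharp)$ would split into two $G_1$-orbits of length $7$, contradicting the above.

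For part~(ii), recall from Construction~\ref{exam:two} that $G_2$ is the stabiliser in $H$ of a hyperplane $U$. The kernel of the action of $G_2$ on the submodule $U\cong\bbF_2^3$ is a normal subgroup of $G_2\cong\GL_3(2)$ contained in the group (of order $8$) of elements acting trivially on both $U$ and $V/U$, hence is trivial; so the induced map $G_2\to\GL(U)$ is an isomorphism by order, $G_2$ acts on $U$ as the full group $\GL(U)$, and $U\setminus\{0\}$ (of length $7$) is a single $G_2$-orbit containing no $G_2$-fixed point. Moreover $\GL_3(2)$, being simple of order $168=2^3\cdot3\cdot7$, has no proper subgroup of index less than $7$, so every $G_2$-orbit on $V\setminus\{0\}$ of length $>1$ has length at least $7$. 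Consequently, \emph{provided $G_2$ fixes no nonzero vector}, the $8$-element set $V\setminus U$ cannot be a union of two or more $G_2$-orbits and must be a single orbit of length $8$; together with $\{0\}$ and $U\setminus\{0\}$ these are exactly the three claimed orbits, of lengths $1,7,8$.

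The one real point --- and the main obstacle --- is to rule out a nonzero $G_2$-fixed vector. Such a vector $u$ would lie outside $U$, so $V=U\oplus\langle u\rangle$ as $G_2$-modules. To exclude this I would use part~(i) together with the fact that the graph automorphism $\gamma$ of $\GL(V)=\GL_4(2)$ agrees, up to an inner automorphism, with the inverse-transpose map, so that the $G_2$-module $V$ is isomorphic to the $G_1$-module $V^*$. Dualising the nonsplit $G_1$-module $V$ of part~(i) --- an extension of a $3$-dimensional irreducible module by the trivial module $W$ --- shows that $V^*$, and hence the $G_2$-module $V$, is a nonsplit extension of the trivial module by a $3$-dimensional irreducible one, so it has no trivial submodule; that is, $G_2$ fixes no nonzero vector, as required. (Alternatively, one could invoke the classical fact that the two degree-$15$ $2$-transitive representations of $\A_7$ are inequivalent --- equivalently, that no subgroup of $H$ is simultaneously a hyperplane-stabiliser and a point-stabiliser of the action on $V^\sharp$ --- which contradicts $G_2=H_u$ directly.)
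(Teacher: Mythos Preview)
Your argument is correct. Part~(i) matches the paper's proof essentially verbatim; the extra paragraph on the nonsplit module structure is sound and is not in the paper, but you need it for your approach to~(ii).

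For part~(ii) the two proofs diverge at the crucial step of showing that $G_2$ fixes no nonzero vector. The paper argues directly from the $2$-transitive action of $H$ on $\calH$: since $G_2=H_U$ is transitive on the $14$ hyperplanes in $\calH\setminus\{U\}$, a hypothetical fixed vector $u\in V^\sharp$ would give a $G_2$-invariant $7$-set $\calH_u=\{U'\in\calH:u\in U'\}$ inside $\calH\setminus\{U\}$ (note $u\notin U$), a contradiction. This is short and uses nothing beyond the data already set up in the construction. Your route instead pulls the question back through $\gamma$: identifying the $G_2$-module $V$ with the $G_1$-module $V^*$ via the inverse-transpose description of the graph automorphism, and then invoking the nonsplitness established in~(i) to see that $V^*$ has no trivial submodule. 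This is a perfectly valid and pleasantly structural argument, and it makes the $\gamma$-duality between $G_1$ and $G_2$ do real work; the cost is the additional nonsplitness claim and the (standard) identification of $\gamma$ with inverse-transpose up to inner automorphisms. Your alternative via the inequivalence of the two degree-$15$ actions of $\A_7$ is in spirit closer to the paper's argument. A minor further difference: you show $U\setminus\{0\}$ is a single orbit by proving $G_2\to\GL(U)$ is onto, whereas the paper deduces it from the size constraint together with the absence of fixed vectors.
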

\begin{proof}
    Note that $G_1$ has index $15$ in $\A_7$.
    Then we have $G_1\cong\PSL_2(7)\cong\GL_3(2)$ by $\mathbb{ATLAS}$~\cite{conway1985ATLAS}.
    Recall that $H$ is $2$-transitive on $V^\sharp$, and $G_1$ is the stabilizer of $v$ in $H$.
    Hence $G_1\cong \GL_3(2)$ has three orbits $\{0\}$, $\{v\}$ and $V\setminus W$ as in part~(i).

    Since $G_2={G_1}^\gamma<H^\gamma=H$, we have that $G_2\cong G_1\cong \GL_3(2)$.
    As $H$ is $2$-transitive on $\calH$ and $G_2$ is the stabilizer of $U\in \calH$ in $H$, we have that $G_2$ is transitive on $\calH\setminus \{U\}$.
    We claim that $G_2$ has no fixed vectors in $V^\sharp$.
    Otherwise, if $G_2$ fixes $u\in V^\sharp$, then $\calH_u$, the set of hyperplanes containing the vector $u$, is $G_2$-invariant and $|\calH_u|=7$, which contradicts the transitivity of $G_2$ on $\calH\setminus\{U\}$.
    Hence $G_2$ has no fixed vectors in $V^\sharp$.
    By $\mathbb{ATLAS}$, the maximal subgroups of $\PSL_2(7)\cong\GL_3(2)$ have index at least $7$.
    This implies that each orbit of $G_2$ on $V$ has length either $1$ or no less than $7$.
    Note that $U\setminus\{0\}$ is $G_2$-invariant of size $7$, and hence it is an orbit of $G_2$.
    Note that $V\setminus U$ is also $G_2$-invariant and has size $8$.
    Hence the possible lengths of orbits of $G_2$ on $V\setminus U$ are $1$, $7$ and $8$.
    Since $G_2$ does not fix any non-zero vector, we have that $V\setminus U$ is an orbit of $G_2$.
    Therefore, $G_2\cong\GL_3(2)$ has $3$ orbits: $\{0\}$, $U\setminus\{0\}$ and $V\setminus U$, as in part~(ii).
\end{proof}

The next lemma shows that $G_1$ and $G_2$ are really exceptional.

\begin{lemma}\label{prop:two-2}
Using the notation defined in Construction~$\ref{exam:two}$ and letting $L=\GL(W)\times \GL(U)$, the following statements hold.
	\begin{enumerate}
	\item $L\cong\GL_3(2)$ has the following $4$ orbits:
	\[\mbox{$\{0\}$, $W\setminus\{0\}$, $U\setminus \{0\}$, and $V\setminus (W\cup U)$ (lengths $1$, $1$, $7$ and $7$, respectively)}\]
	\item $\GL(V)\cong \GL_4(2)$ contains exactly $3$ conjugacy classes of subgroups which are isomorphic to $\GL_3(2)$, with representatives $G_1$, $G_2$ and $L$.
	\item Let $X_1=V{:}G_1$, $X_2=V{:}G_2$ and $X_3=V{:}L$.
    Then
    \begin{enumerate}
        \item $X_1$ has a unique minimal normal subgroup $W$;
        \item $X_2$ has a unique minimal normal subgroup $U$; and
        \item $X_3$ has two minimal normal subgroups $W$ and $U$.
    \end{enumerate}
	\end{enumerate}
\end{lemma}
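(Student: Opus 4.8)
\noindent\emph{Proof proposal.} I would handle the three parts in turn, drawing freely on Lemma~\ref{prop:two}. Part~(i) is a direct computation: since $\dim W=1$ we have $\GL(W)=\GL_1(2)=1$, so $L=\GL(W)\times\GL(U)\cong\GL(U)\cong\GL_3(2)$ acts on $V=W\oplus U$ trivially on $W$ and as the whole of $\GL(U)$ on $U$. Thus $0$ and the unique nonzero vector $v$ of $W$ are fixed, $L$ is transitive on the $7$ nonzero vectors of $U$, and on the coset $v+U$ it fixes only $v$; as $(v+U)\setminus\{v\}=V\setminus(W\cup U)$ has $7$ elements, this is the fourth orbit, giving the orbits and lengths stated.

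For part~(iii) the key step will be the general fact that if a nonabelian simple group $K$ acts faithfully on an elementary abelian $2$-group $V$, then the minimal normal subgroups of $X=V{:}K$ are exactly the irreducible $\bbF_2K$-submodules of $V$. Indeed, if $N\trianglelefteq X$ is minimal with $N\cap V\neq1$, then $N\cap V\trianglelefteq X$ forces $N=N\cap V\leqslant V$, so $N$ is a minimal $\bbF_2K$-submodule; and if $N\cap V=1$, then $NV=X$ since $K$ is simple, whence $[N,V]\leqslant N\cap V=1$ makes $N\cong K$ centralise $V$, contradicting faithfulness. It then remains to read off the $\bbF_2K$-submodule lattice of $V$ for $K=L,G_1,G_2$. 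For $L$ one has $V|_L=W\oplus U$ with $W$ trivial of dimension $1$ and $U$ the natural (hence irreducible, fixed-point-free) module, and a short check shows its only submodules are $0,W,U,V$, so $X_3$ has the two minimal normal subgroups $W$ and $U$. For $G_1$, Lemma~\ref{prop:two}(i) shows that $G_1$ fixes only $0$ and $v$ and is transitive on $(V/W)\setminus\{0\}$; hence $W$ is the only $1$-dimensional submodule, $V/W$ is irreducible, a $3$-dimensional complement of $W$ is excluded by comparing orbit lengths, and $V|_{G_1}$ is uniserial $0\subset W\subset V$ with unique irreducible submodule $W$. The case of $G_2$ is dual, using Lemma~\ref{prop:two}(ii): $G_2$ has no nonzero fixed vector, $U$ is an irreducible submodule, and $V|_{G_2}$ is uniserial $0\subset U\subset V$ with unique irreducible submodule $U$. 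This determines the minimal normal subgroups of $X_1,X_2,X_3$ as claimed.

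For part~(ii) I must show that $G_1,G_2,L$ are pairwise non-conjugate in $\GL(V)$ and that any subgroup of $\GL(V)$ isomorphic to $\GL_3(2)$ is conjugate to one of them. Non-conjugacy is a module-theoretic matter: conjugate subgroups yield isomorphic $\bbF_2\GL_3(2)$-modules on $V$, and by parts~(i) and~(iii) the module $V|_L$ is decomposable while $V|_{G_1}$ and $V|_{G_2}$ are indecomposable, and $V|_{G_1}$ has a nonzero fixed vector whereas $V|_{G_2}$ does not, so the three are pairwise non-isomorphic. For completeness, let $K\leqslant\GL(V)$ with $K\cong\GL_3(2)$ and regard $V$ as a faithful $4$-dimensional $\bbF_2K$-module; as the irreducible $\bbF_2\GL_3(2)$-modules have dimensions $1,3,3,8$ (trivial $\mathbf1$, natural $N$, dual $N^*$, Steinberg), the composition factors of $V|_K$ are $\{\mathbf1,N\}$ or $\{\mathbf1,N^*\}$, and precomposing a fixed isomorphism $\GL_3(2)\to K$ with the graph automorphism (a generator of $\Out(\GL_3(2))$) swaps the two cases without altering $K$. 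Hence, up to conjugacy, $V|_K$ is $\mathbf1\oplus N$, a uniserial module with socle $\mathbf1$, or a uniserial module with socle $N$; these three types are realised by $L$, $G_1$, $G_2$ respectively, and $\GL(V)$-conjugacy classes of such $K$ correspond to isomorphism classes of the module $V|_K$, so the list is complete provided there is at most one module of each non-split type. The main obstacle I anticipate is precisely this last uniqueness, which amounts to the cohomological input $\dim_{\bbF_2}H^1(\GL_3(2),N)=\dim_{\bbF_2}H^1(\GL_3(2),N^*)=1$; I would either cite it from known computations of the low-degree cohomology of $\GL_3(2)$, or else sidestep it by quoting the number of conjugacy classes of subgroups isomorphic to $\GL_3(2)$ in $\GL_4(2)\cong\A_8$ from \cite{conway1985ATLAS} or \cite{bray2013maximal}, after which only the non-conjugacy check above is needed.
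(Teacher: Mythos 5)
Your proposal is correct, and for parts~(ii) and~(iii) it takes a genuinely different, more module-theoretic route than the paper. For part~(ii) the paper notes that $V$ must contain an irreducible $\bbF_2K$-submodule of dimension $1$ or $3$, so that $K$ lies in a parabolic $\P_1$ or $\P_3\cong\AGL_3(2)$, and then counts the conjugacy classes of $\GL_3(2)$-subgroups of $\AGL_3(2)$ by \Magma; you instead classify the faithful $4$-dimensional $\bbF_2\GL_3(2)$-modules directly (split, uniserial with trivial socle, uniserial with $3$-dimensional socle) and identify $\GL(V)$-conjugacy classes of subgroups with $\Out(\GL_3(2))$-orbits of such modules. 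The two arguments rest on the same essential input: your $\dim\H^1(\GL_3(2),N)=\dim\H^1(\GL_3(2),N^*)=1$ is precisely the paper's \Magma-verified fact that $\AGL_3(2)$ has exactly two classes of $\GL_3(2)$-subgroups, and you correctly flag this as the one point needing an external citation, so there is no gap. Your non-conjugacy criterion (decomposability and existence of fixed vectors) is equivalent to the paper's observation that $G_1$ stabilizes no hyperplane and $G_2$ no line. For part~(iii) the paper merely asserts that minimal normal subgroups of $V{:}G$ are the minimal $G$-invariant subspaces and reads them off from the orbit data; your explicit justification (a minimal normal subgroup $N$ with $N\cap V=1$ would satisfy $[N,V]\leqs N\cap V=1$ and hence centralize $V$, contradicting faithfulness) supplies a detail the paper leaves implicit. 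What your approach buys is independence from machine computation, at the cost of invoking the $2$-modular representation theory of $\GL_3(2)$; the paper's version is shorter but leans on \Magma\ twice.
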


\begin{proof}
    (i).
    As $\dim W=1$ and $\dim U=3$, it follows that $\GL(W)\cong\GL_1(2)=1$ and $\GL(U)\cong\GL_3(2)$, and hence $L\cong\GL_1(2)\times \GL_3(2)\cong\GL_3(2)$ with orbits given in part~(i).

    (ii).
    By Lemma~\ref{prop:two}, $G_1$ does not stabilize any hyperplanes and $G_2$ does not stabilize any $1$-dimensional subspaces.
    Hence $G_1$, $G_2$ and $L$ are not conjugate to each other.
    Let $G<\GL(V)$ be a subgroup isomorphic to $\GL_3(2)$.
    We only need to show that $G$ is conjugate to one of $G_1$, $G_2$ and $L$ in $\GL(V)$.

    Let $\P_1$ and $\P_3$ be the maximal subgroups of $\GL(V)$ stabilizing a subspace of dimension $1$ and $3$, respectively.
    It can be verified on \Magma~\cite{magma} that the irreducible $\bbF_2 G$-modules are of dimension $1$, $3$ and $8$.
    Hence $V$ has an irreducible $\bbF_2 G$-submodule $V_0$ of dimension $1$ or $3$, and thus $G$ is conjugate to a subgroup of $\P_1$ or $\P_3$.
    We remark that
    \begin{enumerate}
        \item[(a)] $G_1$ stabilizes the $1$-dimensional subspace $W$, and so it is conjugate to some subgroup of $\P_1$;
        \item[(b)] $G_2$ stabilizes the $3$-dimensional subspace $U$, and so it is conjugate to some subgroup of $\P_3$;
        \item[(c)] $L$ stabilizes both $W$ and $U$ with $\dim W=1$ and $\dim U=3$, and so it is conjugate to some subgroups of both $\P_1$ and $\P_3$;
        \item[(d)] it can be verified in~\Magma\ that $\P_1\cong \P_3\cong \AGL_3(2)$ has two conjugacy classes of subgroup isomorphic to $\GL_3(2)$.
    \end{enumerate}
    Hence if $G$ is conjugate to a subgroup of $\P_1$, then it is conjugate to either $G_1$ or $L$; and if $G$ is conjugate to a subgroup of $\P_3$, then it is conjugate to either $G_2$ or $L$.
    Thus $G$ is conjugate to one of $G_1$, $G_2$ and $L$.

    (iii).
    Note that the minimal normal subgroup $N$ of an affine group $X=V{:}G$ (with $G\leqs\GL(V)$) is a $G$-invariant subspace of $V$.
    Hence part~(iii) immediately follows from part~(i) and Lemma~\ref{prop:two}.
\end{proof}

\section{Preliminaries}\label{sec:reducible}

Assume that $V=\bbF_p^n$ and $G\leqs \GL(V)$.
It is known that the affine group $V{:}G\leqs\AGL(V)$ is imprimitive if and only if $G$ is reducible, see~\cite[Page 230]{praeger1993ONanScott}.
\begin{proposition}\label{prop:redu}
    Assume that $G$ stabilizes a non-trivial subspace $W<V$.
    Then the following statements hold.
    \begin{enumerate}
        \item $V{:}G$ has rank $3$ if and only if $G$ has the following $3$ orbits on $V$:
        \[\{0\},\ W\setminus \{0\}\mbox{ and }V\setminus W.\]

        \item The set of cosets $V/W=\{v+W: v\in V\}=\{\overline{v}:v\in V\}$ is a block system of $V{:}G$, where $\overline{v}=v+W=\{v+w:w \in W\}$.

        \item If $V{:}G$ has rank $3$, then $G^W$ and $G^{V/W}$ are transitive linear groups.
    \end{enumerate}
\end{proposition}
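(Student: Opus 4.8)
The plan is to prove the three statements of Proposition~\ref{prop:redu} in order, each being a fairly direct consequence of the basic correspondence between orbits of $G$ on $V$ and orbits of $V{:}G$ on the coset space $V$.

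For part~(i), recall that $V{:}G$ acts on $V$ with $V$ acting by translations and $G$ fixing $0$; the rank of $V{:}G$ equals the number of orbits of the point stabiliser $G$ on $V$. So $V{:}G$ has rank $3$ exactly when $G$ has three orbits on $V$. One of these orbits is always $\{0\}$, and since $G$ stabilises $W$ it also stabilises $W\setminus\{0\}$ and $V\setminus W$, both of which are non-empty because $0<\dim W<n$; thus $G$ always has at least three orbits here, and has exactly three if and only if $W\setminus\{0\}$ and $V\setminus W$ are each single $G$-orbits, which is the displayed list.

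For part~(ii), this is just the standard fact that the cosets of a $G$-invariant subgroup form a block system. Explicitly, $V/W$ is a partition of $V$, and it is preserved by translations (translation by $v_0$ sends $v+W$ to $(v+v_0)+W$) and by $G$ (since $W^g=W$, we get $(v+W)^g=v^g+W$); hence $V/W$ is a $V{:}G$-invariant partition, i.e.\ a block system. (One should remark that it is non-trivial precisely because $W$ is a proper non-zero subspace.)

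For part~(iii), assume $V{:}G$ has rank $3$. By part~(i), $G$ is transitive on $W\setminus\{0\}$, so $G^W$ is a transitive subgroup of $\GL(W)$; being a linear group transitive on the non-zero vectors, it is a transitive linear group. For the quotient, by part~(ii) the cosets $V/W$ form a block system, and $G$ acts on $V/W$ as a linear group on the quotient vector space. The image of the $G$-orbit $V\setminus W$ in $V/W$ covers all non-zero cosets; so $G^{V/W}$ is transitive on $(V/W)\setminus\{\overline 0\}$, hence a transitive linear group. I expect no genuine obstacle here: the only point needing a little care is to check that $V\setminus W$ really surjects onto the non-zero cosets of $V/W$ (equivalently, that every coset other than $W$ itself contains a vector outside $W$, which is immediate), and to phrase the reduction cleanly so that "transitive linear group" is correctly invoked for both the sub and the quotient.
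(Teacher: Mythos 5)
Your proposal is correct and follows essentially the same route as the paper: part (i) by counting orbits of the point stabiliser, part (ii) by observing the coset partition is invariant (the paper phrases this as the orbits of the normal subgroup $W \normeq V{:}G$ forming a block system, which is the same content as your direct verification), and part (iii) by pushing the orbits $W\setminus\{0\}$ and $V\setminus W$ to $W$ and $V/W$ respectively.
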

\begin{proof}
    By definition, the rank of $V{:}G$ is equal to the number of orbits of $G$ acting on $V$.
    Since both $\{0\}$ and $W$ are $G$-invariant, part~(i) immediately follows by counting the number of orbits of $G$.

    Note that $W$ is now a normal subgroup of $V{:}G$.
    Hence the orbits of $W$ acting on $V$ form a block system, where the action of $W$ on $V$ is by translation (any $w\in W$ maps $v\in V$ to $v+w$).
    It is clear that $\overline{v}$ is an orbit of $W$ for each $v\in V$, and thus $V/W$ is a block system, as in part~(ii).

    Assume that $V{:}G$ has rank $3$.
    By part~(i), we have that $G$ is transitive on $W\setminus\{0\}$, and hence $G^W$ is a transitive linear group.
    Since $V/W$ is a block system of $V{:}G$, the induced permutation group $G^{V/W}$ is well-defined.
    Recall that $G$ is transitive on $V\setminus W$.
    It follows that $G$ is transitive on $V/W\setminus\{\overline{0}\}$, that is, $G^{V/W}$ is a transitive linear group, as in part~(iii).
\end{proof}

The above proposition allows us to make the following hypothesis in this section.

\begin{hypothesis}\label{hypo:2trans}
    Let $G\leqs\GL(V)$ with $V=\bbF_p^n$ such that $G$ stabilizes a non-trivial proper subspace $W$ of dimension $d$.
    Moreover, assume that both $G^{W}$ and $G^{V/W}$ are transitive linear groups.
\end{hypothesis}

Now we give a criterion for $G$ having $3$ orbits on $V$.
\begin{lemma}\label{lem:crit}
Under Hypothesis~$\ref{hypo:2trans}$, $G$ has $3$ orbits on $V$ if and only if
    $G_{\overline{v}}$ acts transitively on $\overline{v}=v+W=\{v+w:w\in W\}$ for some $v\in V\setminus W$.
\end{lemma}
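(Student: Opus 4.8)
The plan is to reduce Lemma~\ref{lem:crit} to a routine fact about transitive actions on a union of blocks. First, by Proposition~\ref{prop:redu}(i) the group $G$ has $3$ orbits on $V$ exactly when its orbits are $\{0\}$, $W\setminus\{0\}$ and $V\setminus W$; and under Hypothesis~\ref{hypo:2trans} the transitive linear group $G^W$ already guarantees that $W\setminus\{0\}$ is a single $G$-orbit. Since $W$ is $G$-invariant, no $G$-orbit meets both $W\setminus\{0\}$ and $V\setminus W$, so the number of $G$-orbits on $V$ equals $2$ plus the number of $G$-orbits on $V\setminus W$. Hence $G$ has $3$ orbits on $V$ if and only if $G$ is transitive on $V\setminus W$, and it remains to show that the latter is equivalent to $G_{\overline v}$ being transitive on $\overline v$ for some $v\in V\setminus W$.

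Next I would set up the block picture supplied by Proposition~\ref{prop:redu}(ii): the cosets $\overline u=u+W$ form a block system of $V{:}G$, and $V\setminus W$ is precisely the union of the blocks $\overline u$ with $u\in V\setminus W$, that is, with $\overline u\neq\overline 0$. Because $G^{V/W}$ is a transitive linear group, $G$ permutes these blocks transitively. Thus $V\setminus W$ is a disjoint union of blocks forming a single $G$-orbit of blocks.

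Then I would apply the general principle: if a group acts on a set $\Omega$ that is a disjoint union of blocks lying in one orbit, then the action on $\Omega$ is transitive if and only if the setwise stabiliser $G_B$ of one (equivalently, every) block $B$ acts transitively on $B$. The forward implication is immediate: given $x,y$ in the same block $B$, any $g\in G$ with $x^g=y$ must fix $B$ setwise, so $g\in G_B$. For the converse, given $x\in B$ and $y\in B'$, pick $h\in G$ with $B^h=B'$; then $G_{B'}=h^{-1}G_Bh$ acts on $B'=B^h$, and this action is carried isomorphically from that of $G_B$ on $B$ by the bijection $h\colon B\to B'$, so it is transitive, and one finishes inside $B'$. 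Taking $\Omega=V\setminus W$ and $B=\overline v$ yields exactly the asserted equivalence.

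The argument is entirely formal; the only subtle point is that Hypothesis~\ref{hypo:2trans} and the statement only give transitivity of $G_{\overline v}$ on a single coset $v+W$, so one has to observe that $G$-transitivity on the blocks propagates this transitivity to every non-zero coset (via conjugation by the element of $G$ realising that transitivity), which is precisely why ``for some $v$'' suffices here. I expect no genuine obstacle beyond keeping this conjugation bookkeeping straight.
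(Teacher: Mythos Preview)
Your proposal is correct and follows essentially the same approach as the paper: both reduce the question to transitivity of $G$ on $V\setminus W$, use the coset partition $\{\overline{u}:u\notin W\}$ as a block system on which $G$ acts transitively (via $G^{V/W}$), and then argue that transitivity on the union of blocks is equivalent to transitivity of one block stabiliser on its block. The only difference is cosmetic: the paper carries out the converse by explicit element-chasing (find $g_1$ with $\overline{u}^{g_1}=\overline{v}$, then $g_2\in G_{\overline{v}}$ with $(u^{g_1})^{g_2}=v$), whereas you package the same two steps as an instance of a general block principle.
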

\begin{proof}
    Assume that $G$ has $3$ orbits on $V$.
    Then $G$ is transitive on the set $V\setminus W$.
    Note that each subset $\overline{v}$ for $v\in V\setminus W$ forms a block of $G$ acting on $V\setminus W$.
    This yields that $G_{\overline{v}}$ acts transitively on $\overline{v}=v+W$.

    Conversely, we assume that $G_{\overline{v}}$ acts transitively on $\overline{v}=v+W$ for some $v\in V\setminus W$.
    Note that $\{0\}$ and $W\setminus\{0\}$ are two orbits of $G$ acting on $V$ as $G^W$ is a transitive linear group.
    It suffices to show that $V\setminus W$ forms an orbit of $G$.
    Let $u\in V\setminus W$.
    Then there exists some $g_1\in G$ such that $\overline{u}^{g_1}=\overline{v}$ as $G^{V/W}$ is a transitive linear group.
    Hence we have that $u^{g_1}\in \overline{v}$.
    Since $G_{\overline{v}}$ acts transitively on $\overline{v}$, there exists some $g_2$ such that $(u^{g_1})^{g_2}=v$.
    Thus $G$ acts transitively on $V\setminus W$, and so $G$ acting on $V$ has $3$ orbits: $\{0\}$, $W\setminus\{0\}$ and $V\setminus W$.
\end{proof}

Let $\P[W]$ be the maximal parabolic subgroup of $\GL(V)$ stabilizing the subspace $W$, see~\cite[Page 47]{wilson2009finite} for details.
It is known that
\begin{equation}\label{eq:PW}
    \P[W]=Q{:}L\cong p^{d(n-d)}{:}(\GL_{d}(p)\times \GL_{n-d}(p)),\mbox{ where}
\end{equation}
\begin{enumerate}
    \item[(a)] $Q=O_p(\P[W])$ is the intersection of the kernels of $\P[W]$ acting on $W$ and $V/W$;
    \item[(b)] $L$ is a \textit{Levi subgroup} of $\GL(V)$, that is, $L$ stabilizes a direct sum decomposition $V=W\oplus U$ for some complement $U$ of $W$.
\end{enumerate}

Note that $Q$ is an elementary abelian $p$-group, and hence $Q$ can be naturally viewed as an $\bbF_p\P[W]$-module where the action is given by conjugation of $\P[W]$ on $Q$.
Let $v_1,...,v_n$ be a basis of $V$ such that $v_1,...,v_d$ is a basis of $W$.
With respect to this basis, we can identify every element $x$ in $\P[W]$ as a matrix of form $\begin{pmatrix}A&C\\0&B\end{pmatrix}$, where
\begin{enumerate}
    \item[(a)] $A\in\GL_{d}(p)$ is the matrix form of $x$ acting on $W$ with respect to the basis $v_1,...,v_d$;
    \item[(b)] $B\in\GL_{n-d}(p)$ is the matrix form of $x$ acting on $V/W$ with respect to the basis $v_{d+1},...,v_n$; and
    \item[(c)] $C\in\M_{d\times(n-d)}(\bbF_p)$ is the set of $d\times(n-d)$ matrices over $\bbF_p$.
\end{enumerate}
In particular, we have that each element in $Q$ has matrix form $\begin{pmatrix}I&C\\0&I\end{pmatrix}$, and each matrix in $\M_{d\times(n-d)}(\bbF_p)$ is a linear combination of $e_i\cdot f_j^\top$'s where $e_1,...,e_d$ and $f_1,...,f_{n-d}$ are bases of $\M_{d\times 1}(\bbF_p)$ and $\M_{(n-d)\times 1}(\bbF_p)$, respectively.
We remark that $\P[W]$ naturally acts on vector spaces $W$ and $V/W$, and these actions naturally admit $\bbF_p\P[W]$-module structures on $W$ and $V/W$.
Note that
\[\begin{pmatrix}A&C\\0&B\end{pmatrix}\begin{pmatrix}I&e_if_j^\top\\0&I\end{pmatrix}\begin{pmatrix}A&C\\0&B\end{pmatrix}^{-1}=\begin{pmatrix}I&(Ae_i)((B^{-1})^\top f_j)^\top\\0&I\end{pmatrix}.\]
This yields that $Q\cong W\otimes(V/W)^*$ as $\bbF_p\P[W]$-modules, where $(V/W)^*$ is the dual of $V/W$.
We give the following lemma for $G\leqs\P[W]$.

\begin{lemma}\label{lem:plocal}
    Under Hypothesis~$\ref{hypo:2trans}$, for $Q=O_p(\P[W])$, the following statements hold.
    \begin{enumerate}
        \item $G\cap Q=G_{(W)}\cap G_{(V/W)}=O_p(G)$.
        \item $Q$ is naturally isomorphic to the $\bbF_pG$-module $W\otimes (V/W)^*$, and $O_p(G)$ is an $\bbF_pG$-submodule of $Q$.
    \end{enumerate}
\end{lemma}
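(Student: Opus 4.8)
The plan is to settle (i) by identifying $G\cap Q$ with $G_{(W)}\cap G_{(V/W)}$ and squeezing this group between $O_p(G)$ and $O_p(G)$, and then to read off (ii) from the $\bbF_p\P[W]$-module computation established just before the lemma. Since $G$ stabilizes $W$ we have $G\leqs\P[W]$, so $G\cap Q$ is defined, and by the description of $Q$ as the intersection of the kernels of the $\P[W]$-actions on $W$ and on $V/W$, an element $g\in G$ lies in $Q$ exactly when it acts trivially on both $W$ and $V/W$; that is, $G\cap Q=G_{(W)}\cap G_{(V/W)}$. As $Q\normeq\P[W]$ is a $p$-group, $G\cap Q$ is a normal $p$-subgroup of $G$, hence $G\cap Q\leqs O_p(G)$. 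It remains to prove the reverse inclusion $O_p(G)\leqs G\cap Q$.

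For this I would use the elementary fact that a transitive linear group has trivial $p$-core: if $H\leqs\GL(Y)$ acts transitively on $Y\setminus\{0\}$ for a nonzero $\bbF_p$-space $Y$ and $N:=O_p(H)\neq 1$, then $C_Y(N)$ is nonzero (a $p$-group acting on a nonzero $\bbF_p$-space has a nonzero fixed subspace, by orbit-counting modulo $p$), is $H$-invariant (since $N\normeq H$), and is proper in $Y$ (since $1\neq N\leqs\GL(Y)$ acts nontrivially), which contradicts transitivity of $H$ on $Y\setminus\{0\}$; hence $O_p(H)=1$. By Hypothesis~\ref{hypo:2trans}, both $G^W$ and $G^{V/W}$ are transitive linear groups, so $O_p(G^W)=1$ and $O_p(G^{V/W})=1$. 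The restriction epimorphisms $G\to G^W$ and $G\to G^{V/W}$ carry $O_p(G)$ into $O_p(G^W)$ and $O_p(G^{V/W})$ respectively, so $O_p(G)$ acts trivially on $W$ and on $V/W$, i.e. $O_p(G)\leqs G_{(W)}\cap G_{(V/W)}=G\cap Q$. Combined with the previous paragraph this gives $G\cap Q=G_{(W)}\cap G_{(V/W)}=O_p(G)$, proving (i).

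For (ii), the isomorphism $Q\cong W\otimes(V/W)^*$ of $\bbF_p\P[W]$-modules furnished by the conjugation formula preceding the lemma restricts along $G\hookrightarrow\P[W]$ to an isomorphism of $\bbF_pG$-modules, where $W$ and $V/W$ carry their natural $G$-module structures. Finally, by (i) we have $O_p(G)=G\cap Q$, which is a subgroup of the elementary abelian $p$-group $Q$, hence an $\bbF_p$-subspace of it, and which is stable under the conjugation action of $G$ on $Q$ because $G$ normalizes $G\cap Q$; since this conjugation action is precisely the $\bbF_pG$-module structure on $Q$, it follows that $O_p(G)$ is an $\bbF_pG$-submodule of $Q$. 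The only genuine content is the statement that transitive linear groups have trivial $p$-core, with its one-line fixed-point proof; the rest is unravelling definitions and restricting an already-computed module structure, so I anticipate no serious obstacle --- the one point needing care, namely matching the duality and the exact module action in (ii), has already been pinned down by that conjugation formula.
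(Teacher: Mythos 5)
Your proof is correct and follows essentially the same route as the paper: identify $G\cap Q$ with $G_{(W)}\cap G_{(V/W)}$, use triviality of $O_p$ for the transitive (hence irreducible) linear groups $G^W$ and $G^{V/W}$ to get $O_p(G)\leqs G\cap Q$, and use normality of the $p$-group $G\cap Q$ in $G$ for the reverse inclusion. You merely spell out the fixed-point argument for $O_p(G^W)=O_p(G^{V/W})=1$, which the paper leaves implicit.
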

\begin{proof}
    Since $Q$ is the intersection of kernels of $\P[W]$ acting on $W$ and $V/W$, we have that $G\cap Q=G_{(W)}\cap G_{(V/W)}$ as $G\leqs \P[W]$.
    Recall that both $G^W$ and $G^{V/W}$ are irreducible.
    This yields that $O_p(G^W)=O_p(G^{V/W})=1$, and then
    \[O_p(G)\leqs G_{(W)}\cap G_{(V/W)}=G\cap Q.\]
    As $Q\normeq \P[W]$, we have that $G\cap Q\normeq G$, and thus $O_p(G)=G\cap Q$, as in part~(i).

    The $\bbF_pG$-module structure of $Q$ is restricted from the $\bbF_p\P[W]$-module structure, and hence part~(ii) clearly holds.
\end{proof}

Denote by $\H^i(A,W)$ the \textit{$i$-th cohomology group} of an $\bbF A$-module $W$.
Remark that $\H^i(A,W)$ is an additive group, $|\H^0(A,W)|$ is equal to the number of fixed-points of $A$ acting on $W$, and $|\H^1(A,W)|$ is equal to the number of conjugacy classes of complements of $W$ in $W{:}A$.
We refer the readers to~\cite{aschbacher1984applications} for more details about properties and applications of first cohomology groups.
Some important properties used in this paper are listed below.
\begin{proposition}\label{prop:coho}
    Let $q=p^f$ with prime $p$, and let $W$ and $U$ be an $\bbF_q A$-module and an $\bbF_q B$-module, respectively.
    Then the following statements hold.
    \begin{enumerate}
        \item If $p\nmid|A|$, then $\H^1(A,W)=0$.
        \item If $N\normeq A$ and $\H^0(N,W)=0$, then $|\H^1(A,W)|\leqs|\H^1(N,W)|$.
        \item (K\"{u}nneth formula~\cite[Proposition 2.5]{cline1975Cohomology}) If $\H^0(A,W)=0$, then
        \[\H^1(A\times B,W\otimes U)\cong \H^1(A,W)\otimes \H^0(B,U).\]
    \end{enumerate}
\end{proposition}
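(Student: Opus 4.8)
The plan is to prove the three parts separately, as each is a standard fact from the cohomology theory of finite groups. For part~(i) I would give the classical averaging argument. Writing $\H^1(A,W)=Z^1(A,W)/B^1(A,W)$, with $Z^1$ the crossed homomorphisms $f\colon A\to W$ and $B^1$ the principal ones $a\mapsto w^{a}-w$, take $f\in Z^1(A,W)$ and set $w_0=|A|^{-1}\sum_{b\in A}f(b)$; this makes sense because $p\nmid|A|$ forces $|A|$ to be invertible in $\bbF_q$, hence on $W$. Summing the cocycle relation over $b\in A$, and using that $b\mapsto ab$ permutes $A$, shows that $f$ is the coboundary attached to $w_0$ (up to sign), so $\H^1(A,W)=0$. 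Equivalently---and this is the formulation I would likely present---$\H^1(A,W)$ is annihilated by $|A|$ while simultaneously being an $\bbF_q$-vector space, hence vanishes as soon as $|A|$ is invertible in $\bbF_q$.

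For part~(ii) I would invoke the five-term inflation--restriction exact sequence (the low-degree part of the Lyndon--Hochschild--Serre spectral sequence) attached to $N\normeq A$:
\[
0\longrightarrow\H^1(A/N,W^{N})\longrightarrow\H^1(A,W)\xrightarrow{\ \mathrm{res}\ }\H^1(N,W)^{A/N}\longrightarrow\H^2(A/N,W^{N}).
\]
The hypothesis $\H^0(N,W)=W^{N}=0$ makes the leftmost nonzero term $\H^1(A/N,0)$ vanish, so restriction embeds $\H^1(A,W)$ into $\H^1(N,W)^{A/N}\leqs\H^1(N,W)$ (in fact isomorphically onto the $A/N$-fixed part, since the $\H^2$-term dies as well); comparing orders yields $|\H^1(A,W)|\leqs|\H^1(N,W)|$. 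A direct cocycle argument is also possible---a cocycle on $A$ whose restriction to $N$ is a coboundary can, after subtracting that coboundary, be taken to vanish on $N$, and then the cocycle identity forces its image into $W^{N}=0$---but one has to check that the correcting vector is well defined modulo $W^{N}$, so the spectral-sequence route is the cleaner one to write up.

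Part~(iii) requires nothing new on our part: it is precisely \cite[Proposition 2.5]{cline1975Cohomology}. For orientation, the low-degree K\"{u}nneth formula reads
\[
\H^1(A\times B,\,W\otimes U)\cong\bigl(\H^1(A,W)\otimes\H^0(B,U)\bigr)\oplus\bigl(\H^0(A,W)\otimes\H^1(B,U)\bigr),
\]
and the assumption $\H^0(A,W)=0$ kills the second summand, leaving the stated isomorphism. I do not expect a genuine obstacle here: the proposition is essentially a bookkeeping lemma collecting three standard homological facts, and the only point that needs real care is the one flagged in part~(ii)---that the inflation term is genuinely $\H^1(A/N,W^{N})$, so that $W^{N}=0$ annihilates it, and that the image of restriction is constrained to the $A/N$-invariants.
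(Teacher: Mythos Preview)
Your proposal is correct. Each of the three arguments you sketch is a standard and valid proof of the corresponding assertion: the averaging/annihilation argument for~(i), the inflation--restriction exact sequence for~(ii), and the degree-one K\"{u}nneth decomposition for~(iii).

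There is nothing to compare, because the paper does not supply a proof of this proposition at all. It is stated as a list of well-known cohomological facts, with a pointer to~\cite{aschbacher1984applications} for general background and the citation~\cite[Proposition~2.5]{cline1975Cohomology} for part~(iii). So your write-up goes beyond what the paper does; if anything, you could trim it further and simply cite the standard references, matching the paper's treatment.
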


Recall the notation $\P[W]=Q{:}L$ with $L=\GL(W)\times \GL(U)$, see Equation~\ref{eq:PW}.
Similarly to Lemma~\ref{prop:two-2}\,(i), $L$ has $4$ orbits on $V$.
Hence each subgroup of $L$ has at least $4$ orbits on $V$.
Note that $L$ is a complement of $Q$ in $\P[W]$.
If $G<\P[W]$ is not $p$-local, then $G\cap Q=1$ and $G$ is a complement of $Q$ in $Q{:}G<\P[W]$.
These facts imply the following lemma.
\begin{lemma}\label{lem:conjLevi}
    If $O_p(G)=1$ and $\H^1(G,Q)=0$, then $G$ has at least $4$ orbits on $V$.
\end{lemma}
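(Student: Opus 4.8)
The plan is to use the two hypotheses to force $G$ to be conjugate, inside the parabolic $\P[W]$, to a subgroup of the Levi complement $L=\GL(W)\times\GL(U)$, and then to invoke the fact that every subgroup of $L$ has at least $4$ orbits on $V$. Since $G$ stabilizes $W$ we have $G\leqs\P[W]=Q{:}L$, and Lemma~\ref{lem:plocal}\,(i) together with $O_p(G)=1$ gives $G\cap Q=O_p(G)=1$. Hence $M:=QG=Q{:}G$ is a split extension of the (abelian) group $Q$ by $G$ inside $\P[W]$, and $G$ is one complement of $Q$ in $M$.

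I would then produce a second complement of $Q$ in $M$ lying inside $L$. Let $\rho\colon\P[W]\to L$ be the projection with kernel $Q$ afforded by the decomposition $\P[W]=Q{:}L$; this is a group homomorphism because $Q\normeq\P[W]$. The restriction $\rho|_M$ has kernel $Q$, while $\rho|_G$ is injective since $G\cap Q=1$, so $\widetilde{G}:=\rho(G)\leqs L$ is isomorphic to $G$; comparing orders, $|M|=|Q|\,|G|=|Q|\,|\widetilde{G}|$ forces $M=Q\widetilde{G}=Q{:}\widetilde{G}$, so $\widetilde{G}$ is a complement of $Q$ in $M$ contained in $L$. Now $\H^1(G,Q)=0$ says there is exactly one conjugacy class of complements of $Q$ in $M$ (the interpretation of $\H^1$ recorded before Proposition~\ref{prop:coho}), so $G$ and $\widetilde{G}$ are conjugate in $M\leqs\GL(V)$, say $G=\widetilde{G}^{x}$ for some $x\in M$. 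Conjugation by $x\in\GL(V)$ carries the $\widetilde{G}$-orbits on $V$ bijectively onto the $G$-orbits on $V$, so $G$ has exactly as many orbits on $V$ as $\widetilde{G}$. Finally, exactly as in Lemma~\ref{prop:two-2}\,(i), $L=\GL(W)\times\GL(U)$ has precisely the four orbits $\{0\}$, $W\setminus\{0\}$, $U\setminus\{0\}$ and $V\setminus(W\cup U)$ on $V$, all non-empty because $1\leqs d\leqs n-1$ gives $|W\cup U|<|V|$; each of these is a union of $\widetilde{G}$-orbits, so $\widetilde{G}$, and hence $G$, has at least $4$ orbits on $V$.

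The only step requiring a little care is the cohomological one: one must check that the module $Q$ appearing in the hypothesis $\H^1(G,Q)=0$ is precisely the $\bbF_pG$-module whose first cohomology counts conjugacy classes of complements of $Q$ in $Q{:}G$. This is immediate from the discussion preceding Lemma~\ref{lem:plocal}, where $Q$ is identified with $W\otimes(V/W)^{*}$ as an $\bbF_pG$-module via the conjugation action of $G\leqs\P[W]$ on $Q$. The remaining ingredients — the parabolic–Levi decomposition $\P[W]=Q{:}L$ and the orbit count for $L$ — are routine bookkeeping.
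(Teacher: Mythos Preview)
Your proof is correct and follows essentially the same approach as the paper: both arguments set $M=QG$, produce a complement $\widetilde{G}\leqs L$ of $Q$ in $M$ (the paper calls it $L_0$ and obtains it directly from $M\leqs Q{:}L$, you obtain it via the projection $\rho$), use $\H^1(G,Q)=0$ to conjugate $G$ to $\widetilde{G}$ inside $M$, and then appeal to the four-orbit decomposition of $L$. Your write-up is slightly more explicit about why $\widetilde{G}\leqs M$ and why conjugate subgroups have the same orbit count, but there is no substantive difference.
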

\begin{proof}
    Let $Y=\langle Q,G\rangle$.
    Then $Y=Q{:}G$ as $Q\cap G=O_p(G)=1$ by Lemma~\ref{lem:plocal}\,(i).
    Recall that $\P[W]=Q{:}L$, where $L$ stabilizes the direct sum decomposition $V=W\oplus U$.
    Since $Y\leqs\P[W]$, there exists $L_0\leqs L$ such that $Y=Q{:}L_0$.
    Then $G$ is conjugate to $L_0$ in $Y$ as $\H^1(G,Q)=0$, and hence $G$ and $L_0$ have the same number of orbits on $V$.
    By Lemma~\ref{prop:two-2}\,(i), $L$ has $4$ orbits on $V$.
    So each of $G$ and $L_0$ has at least $4$ orbits on $V$.
\end{proof}

The classification of transitive linear groups was completed in~\cite{hering1985Transitive,huppertZweifachTransitiveAufl1957}.
Here we record the classification of non-solvable transitive linear groups.

\begin{theorem}\label{thm:translinear}
    Let $G\leqs \GL(V)$ be a non-solvable transitive linear group acting on $V\cong\bbF_p^n$.
    Then one of the following holds.
    \begin{enumerate}
        \item $G$ belongs to one of the three infinity families:
        \begin{enumerate}
            \item $n=mf$ and $G^{(\infty)}\cong \SL_m(p^f)$ for $m\geqs 2$ and $(m,p^f)\notin\{(2,2),(2,3)\}$;
            \item $n=2mf$ and $G^{(\infty)}\cong \Sp_{2m}(p^f)$ for $2m\geqs 4$ and $(2m,p^f)\neq (4,2)$;
            \item $n=6f$ and $G^{(\infty)}\cong \G_2(2^f)$ for $f>1$.
        \end{enumerate}
        \item $G^{(\infty)}$ and $(n,p)$ are listed in Table~$\ref{tab:sporadic}$. 
    \end{enumerate}
\end{theorem}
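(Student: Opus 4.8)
The plan is to prove Theorem~\ref{thm:translinear}, the classification of non-solvable transitive linear groups. This is a classical result due to Hering, and the modern proof rests on Aschbacher's theorem on subgroups of classical groups combined with the classification of finite simple groups (CFSG). I would treat the statement as an application of the O'Nan--Scott-style analysis of transitive linear groups, so the overall strategy is: reduce to understanding the socle of $G^{(\infty)}$, then invoke CFSG to enumerate possibilities, and finally check transitivity against the orbit-counting constraint.

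\medskip

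First I would set up the framework. Let $G \leqs \GL(V)$ with $V \cong \bbF_p^n$ be transitive on $V^\sharp = V \setminus \{0\}$, so $|V^\sharp| = p^n - 1$ divides $|G|$. Transitivity on the nonzero vectors is a very strong condition: the stabilizer $G_v$ of a nonzero vector has index $p^n - 1$ in $G$. Since $G$ is non-solvable, the quotient $G/\bfZ(G)$ has a nonabelian composition factor. The key structural step is to consider the action of $G$ on $V$ as an irreducible (indeed primitive, by transitivity on $V^\sharp$) linear representation and apply Aschbacher's classification of the maximal subgroups of $\GL_n(p)$. Transitivity on $V^\sharp$ forces $G$ into a very restricted position: the stabilizer structure rules out most of the geometric Aschbacher classes (imprimitive, field-extension, tensor-decomposable configurations typically fail the divisibility $(p^n-1) \mid |G|$ unless $G$ is large), leaving essentially the cases where $G$ normalizes a quasisimple classical group acting naturally, or where $G^{(\infty)}$ lies in the ``$\mathcal{S}$'' class of almost simple groups in a non-natural representation.

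\medskip

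Next I would handle the two resulting regimes. In the \emph{generic} regime, $G^{(\infty)}$ is a classical group in its natural module: here one shows that transitivity on $V^\sharp$ holds precisely for $\SL_m(p^f)$ (acting $\bbF_{p^f}$-linearly on $\bbF_{p^f}^m$, so $n = mf$), for $\Sp_{2m}(p^f)$ (which is transitive on nonzero vectors of its natural symplectic space, $n = 2m f$), and, exceptionally, for $\G_2(2^f)$ acting on its $6$-dimensional module over $\bbF_{2^f}$; these are exactly the three infinite families in part~(i), and the small-case exclusions $(m,p^f) \notin \{(2,2),(2,3)\}$, $(2m, p^f) \neq (4,2)$, and $f > 1$ arise because in those cases the group either becomes solvable or degenerates. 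Verifying transitivity for each family is a direct orbit computation on the natural module and is routine. In the \emph{sporadic} regime, $G^{(\infty)}$ is quasisimple acting on a module that is not a natural classical module; one runs through the candidates supplied by CFSG with small enough dimension that $(p^n - 1) \mid |G^{(\infty)}| \cdot |\Out|$ can hold, and cross-checks transitivity case by case. These surviving candidates are precisely the finite list recorded in Table~\ref{tab:sporadic}, and confirming each entry is transitive (and that no others survive the divisibility and module-dimension bounds) completes part~(ii).

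\medskip

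\textbf{The main obstacle} is the sporadic regime: eliminating the infinitely many a priori candidate quasisimple groups down to the finite table requires the full force of CFSG together with detailed knowledge of the minimal-dimension cross-characteristic and defining-characteristic representations of each family of simple groups, plus careful arithmetic to rule out near-misses of the divisibility condition $(p^n - 1) \mid |G|$. Since this theorem is quoted from~\cite{hering1985Transitive,huppertZweifachTransitiveAufl1957} as an established classification, I would not reprove the representation-theoretic bounds from scratch; rather, I would cite Hering's theorem directly and limit my contribution to organizing the output into the stated form and verifying transitivity on the natural modules for the three infinite families, which is the only genuinely self-contained computation the statement demands.
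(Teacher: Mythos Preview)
The paper does not prove this theorem at all: it is stated purely as a quotation of the classification completed in \cite{hering1985Transitive,huppertZweifachTransitiveAufl1957}, with no argument given. Your proposal ultimately arrives at the same place---citing Hering---so there is no conflict, but the sketch of an Aschbacher/CFSG-based argument you provide is entirely your own addition and goes well beyond what the paper does; the paper simply records the result and moves on.
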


\begin{table}[ht]
    \centering
    \caption{Non-solvable transitive linear groups in Theorem~\ref{thm:translinear}\,(ii)}\label{tab:sporadic}
    \begin{tabular}{cccc}
    \hline
        $(n,p)$  & $G^{(\infty)}$ & number of possible $G$ \\
        \hline
        $(2,11)$ & $\SL_2(5)$ & $2$\\
        $(2,19)$ & $\SL_2(5)$ & $1$\\
        $(2,29)$ & $\SL_2(5)$ & $2$\\
        $(2,59)$ & $\SL_2(5)$ & $1$\\
        $(4,2)$  & $\A_6\cong \Sp_4(2)'$ & $2$ \\
        $(4,2)$  & $\A_7$ & $1$ \\
        $(4,3)$  & $\SL_2(5)$ & $4$\\
        $(4,3)$  & $2_{-}^{1+4}.\A_5$ & $2$ \\
        $(6,2)$  & $\PSU_3(3)\cong \G_2(2)'$ & $2$\\
        $(6,3)$  & $\SL_2(13)$ & $1$\\
        \hline
    \end{tabular}
\end{table}

The following properties of non-solvable transitive linear groups can be observed from the classification.

\begin{proposition}\label{prop:transList}
    Let $X\leqs\GL(U)$ and $Y\leqs\GL(V)$ be non-solvable transitive linear groups over $\bbF_p$.
    Assume that $u\in U\setminus\{0\}$ and $v\in V\setminus\{0\}$.
    If $X$ and $Y$ have isomorphic non-abelian simple composition factors, then
    \[\dim U=\dim V,\ X^{(\infty)}\cong Y^{(\infty)},\mbox{ and }(X^{(\infty)})_u\cong(Y^{(\infty)})_v.\]
\end{proposition}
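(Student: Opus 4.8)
\textbf{Proof proposal for Proposition~\ref{prop:transList}.}

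\medskip

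The plan is to argue directly from the classification of non-solvable transitive linear groups recorded in Theorem~\ref{thm:translinear}, together with Table~\ref{tab:sporadic}, since the statement is really a list-checking assertion. The hypothesis says that $X\leqs\GL(U)$ and $Y\leqs\GL(V)$ are non-solvable transitive linear groups over the \emph{same} prime field $\bbF_p$ which share an isomorphism type of non-abelian simple composition factor; the conclusion is that the two groups have (up to isomorphism) the same socle-type perfect core $X^{(\infty)}\cong Y^{(\infty)}$, the same dimension, and conjugate one-point stabilizers inside that perfect core.

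\medskip

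First I would observe that for a non-solvable transitive linear group $G\leqs\GL(W)$, the quotient $G/G^{(\infty)}$ is solvable (indeed $G^{(\infty)}$ is the last term of the derived series), so every non-abelian simple composition factor of $G$ is already a composition factor of $G^{(\infty)}$. Next I would note that in every line of Theorem~\ref{thm:translinear}(i) and every row of Table~\ref{tab:sporadic}, the perfect group $G^{(\infty)}$ is \emph{quasisimple} — that is, $G^{(\infty)}/\bfZ(G^{(\infty)})$ is simple — so $G^{(\infty)}$ has a \emph{unique} non-abelian simple composition factor, call it $S(G)$. This reduces the hypothesis to: $S(X)\cong S(Y)$. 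So the proposition follows once one checks, going through the list, that the simple group $S$ together with the underlying prime $p$ determines $G^{(\infty)}$ up to isomorphism and determines $\dim W$.

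\medskip

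The bulk of the work is this finite check, which splits into the generic families of Theorem~\ref{thm:translinear}(i) and the finitely many sporadic rows of Table~\ref{tab:sporadic}. For the generic families: if $S(G)\cong\PSL_m(q)$ with $q=p^f$ then reading off $m$, $f$ (hence $n=mf$) from the isomorphism type is routine except for the well-known coincidences among small linear, symplectic and alternating groups (e.g. $\PSL_2(7)\cong\GL_3(2)$, $\PSp_4(2)'\cong\A_6\cong\PSL_2(9)$, $\G_2(2)'\cong\PSU_3(3)$, $\PSL_4(2)\cong\A_8$); one must verify that whenever two entries of the list have isomorphic $S$, the accompanying prime $p$ forces them to coincide or else the dimensions still agree. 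For the sporadic table one checks the rows pairwise: the repeated simple groups are $\A_6\cong\Sp_4(2)'$ (only the row $(4,2)$), $\A_7$ (the row $(4,2)$, but also $\A_7<\A_8\cong\GL_4(2)$ sits inside the $\SL_4(2)$ family with $n=4$, consistent), $\SL_2(5)$ with $S=\A_5$ (rows $(2,11),(2,19),(2,29),(2,59),(4,3)$ — all have $G^{(\infty)}\cong\SL_2(5)$, and the dimension is pinned down by $p$: over $\bbF_{11},\bbF_{19},\bbF_{29},\bbF_{59}$ the faithful module is $2$-dimensional, over $\bbF_3$ it is $4$-dimensional, and crucially $\A_5$ does \emph{not} reappear with two different dimensions over the \emph{same} $p$), and $\PSU_3(3)\cong\G_2(2)'$ (only $(6,2)$). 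Finally, having fixed $G^{(\infty)}$ and $n$, the one-point stabilizer $(G^{(\infty)})_v$ for $v\neq 0$ is the same subgroup of the same abstract group acting on the same module, hence is determined up to isomorphism (indeed, since $G^{(\infty)}$ is transitive on the nonzero vectors in every one of these cases, $(G^{(\infty)})_v$ is even unique up to conjugacy in $G^{(\infty)}$). The main obstacle is purely bookkeeping: making sure no exceptional isomorphism between small simple groups creates a collision between two list entries with the \emph{same} $p$ but different $n$ — I expect there is none, precisely because the table is organised by $(n,p)$ and the repeated simple groups in it always come with distinct primes, except for the genuinely coincident rows.
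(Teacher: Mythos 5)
Your overall strategy (reduce to the unique non-abelian simple composition factor $S(G)$ of $G^{(\infty)}$ and then check, through Theorem~\ref{thm:translinear} and Table~\ref{tab:sporadic}, that $S$ together with $p$ pins down $G^{(\infty)}$, the dimension, and the point stabilizer) is exactly the ``observe it from the classification'' argument the paper has in mind, and most of your bookkeeping is sound. But there is a concrete gap at the one entry you left out. First, your justification via quasisimplicity is false for the row $(n,p)=(4,3)$ with $G^{(\infty)}\cong 2^{1+4}_{-}.\A_5$: this group is perfect but not quasisimple, since $G^{(\infty)}/\bfZ(G^{(\infty)})$ has the normal subgroup $2^4$. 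That by itself is repairable (it still has the unique non-abelian simple composition factor $\A_5$), but the same row breaks the heart of your reduction. When you enumerate the entries with $S\cong\A_5$ you list only the $\SL_2(5)$ rows and conclude that $S$ and $p$ determine $G^{(\infty)}$; you omit the row $(4,3)$, $2^{1+4}_{-}.\A_5$, which also has $S\cong\A_5$ and sits over the \emph{same} prime $p=3$ and the same dimension $4$ as the $\SL_2(5)$ row at $(4,3)$. For that pair the dimensions agree but $\SL_2(5)\not\cong 2^{1+4}_{-}.\A_5$ (orders $120$ and $1920$), and the point stabilizers in the perfect cores have different orders as well, so the step ``$S$ and $p$ determine $G^{(\infty)}$'' is simply false and your argument cannot close there.

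This is not merely a slip in your write-up: it shows that the proposition, read with the hypothesis ``the non-abelian simple composition factors of $X$ and $Y$ agree,'' needs this pair to be excluded. The honest ways to finish are either (a) to read the hypothesis as comparing \emph{all} composition factors with multiplicity (the two groups above are then separated by their elementary abelian $2$-factors: one copy of $\bbZ_2$ versus five), or (b) to observe that in the only place the proposition is invoked (Lemma~\ref{lem:bothfaith}) one actually has $X\cong Y$ as abstract groups, which rules the pair out; either way the $(4,3)$ collision must be addressed explicitly rather than silently skipped. Two smaller points: your remark that the $\A_7$ row ``sits inside the $\SL_4(2)$ family'' is not right ($\A_7<\GL_4(2)$ is a genuinely separate entry, but harmless here since its simple factor $\A_7$ is not $\A_8\cong\PSL_4(2)$); and for the final stabilizer claim you should note that even when $\dim U=\dim V$ the two modules need not be equivalent (e.g.\ a module and its dual for $\SL_m(q)$), so $(X^{(\infty)})_u\cong(Y^{(\infty)})_v$ is obtained via an automorphism of the common perfect core rather than by conjugacy in it.
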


Recall from Lemma~\ref{lem:crit} that if $G$ has $3$ orbits on $V$, then $G_{\overline{v}}$ acts transitively on $\overline{v}=v+W$, and hence $G_{\overline{v}}$ has a subgroup of index $|\overline{v}|=p^d$.
The following lemma is from~\cite[Corollary 2.12 \& Lemma 2.14]{huang2025finite}.
It can be  deduced from Guralnick's classification of finite simple groups with a prime power index subgroup~\cite{guralnick1983Subgroups}.
We remark that the groups $\Sp_4(2)'\cong \A_6$ and $\G_2(2)'\cong \PSU_3(3)$ from \cite[Corollary 2.12]{huang2025finite} are not listed here because $2^4\nmid |\A_6|$ and $2^6\nmid |\PSU_3(3)|$.

\begin{lemma}\label{lem:transindexpd}
    Suppose $T\leqs\GL_n(p)$ is a transitive linear group.
    If $p^d$ divides $|T|$ with $d\geqslant 1$, then $T$ is non-solvable and one of the following holds.
    \begin{enumerate}
        \item $\SL_m(p^f)\normeq T\leqs \GammaL_m(p^f)$ for $m\geqs 3$ and $n = mf$.
        \item $\Sp_{m}(p^f)\normeq T$ for $m\geqs 4$ and $n = mf$.
        \item $\G_2(2^f)\normeq T$ and $(n,p) = (6f,2)$.
    \end{enumerate}

    Moreover, if $T$ has a subgroup of index $p^n$, then $T= \GL_3(2)$ with $(n,p)=(3,2)$.
\end{lemma}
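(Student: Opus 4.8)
The plan is to derive the lemma from the classification of transitive linear groups -- Theorem~\ref{thm:translinear}, together with the solvable case settled in~\cite{huppertZweifachTransitiveAufl1957} -- and, for the final sentence, from Guralnick's classification~\cite{guralnick1983Subgroups} of subgroups of prime power index in finite simple groups. For the first assertion, note that $|T|=(p^n-1)\,|T_v|$ for $v\in V\setminus\{0\}$, so $\gcd(p,p^n-1)=1$ forces $p^d\mid|T_v|$. I would first discard the solvable possibilities: a solvable transitive linear group normalises a subgroup of $\GammaL_1(p^n)$ or is one of finitely many small exceptions, and in each case the $p$-part of a point stabiliser is too small to absorb $p^d$; hence $T$ is non-solvable. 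Theorem~\ref{thm:translinear} then leaves the three infinite families, which yield conclusions (i)--(iii) at once, and the entries of Table~\ref{tab:sporadic}, all of which are eliminated by computing the exact power of $p$ dividing $|T|$ -- this is precisely where $\Sp_4(2)'\cong\A_6$ and $\G_2(2)'\cong\PSU_3(3)$ drop out, as remarked after the statement.

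For the ``moreover'' part, suppose $S\leqslant T$ with $[T:S]=p^n$, so in particular $p^n\mid|T|$. Applying the first assertion with $d=n$, the group $T$ has a normal quasisimple subgroup $N$ equal to $\SL_m(p^f)$ ($m\geqslant3$), $\Sp_m(p^f)$ ($m\geqslant4$) or $\G_2(2^f)$, with $n=mf$ in the first two cases and $(n,p)=(6f,2)$ in the last; moreover $T$, being a subgroup of $\GL_n(p)$ that normalises $N$, lies in the normaliser of $N$ in $\GL_n(p)$, which exceeds $N$ only by scalars and field automorphisms -- a factor whose $p$-part divides $f$. Put $L=N/Z(N)$, a simple group of Lie type in characteristic $p$. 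Now $N\cap S$ has index in $N$ a power of $p$ dividing $p^n$, and since $|Z(N)|$ is prime to $p$ the image of $N\cap S$ in $L$ has the same index, so $L$ has a subgroup of index $p^b$ for some $b\geqslant0$. If $b=0$ then $N\leqslant S$, so $p^n=[T:S]$ divides $[T:N]$, hence $p^n$ divides the $p$-part of $[T:N]$, which divides $f=n/m<n<p^n$ -- a contradiction. So $b\geqslant1$.

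Now Guralnick's theorem forces $L$ to be $\A_{p^b}$, or $\PSL_2(11)$, $\mathrm{M}_{11}$ or $\mathrm{M}_{23}$, or $\PSU_4(2)\cong\PSp_4(3)$, or $\PSL_k(q)$ with a point- or hyperplane-stabiliser of index $(q^k-1)/(q-1)=p^b$. Since $\G_2(2^f)$ and $\G_2(2)'\cong\PSU_3(3)$ appear nowhere on this list, $N$ cannot be of $\G_2$-type. For $N$ of $\SL_m$- or $\Sp_m$-type, using the exceptional isomorphisms among small simple groups one finds: $\PSL_2(11)$ and the Mathieu groups cannot occur; the alternating case forces $\A_8\cong\PSL_4(2)$, hence $m=4$, $f=1$, $p=2$, $n=4$, $T=\GL_4(2)$; the $\PSU_4(2)\cong\PSp_4(3)$ case forces $m=4$, $f=1$, $p=3$, $n=4$, $T\in\{\Sp_4(3),\CSp_4(3)\}$; and the parabolic-index case is compatible with such an $L$ only through the accident $\PSL_3(2)\cong\PSL_2(7)$ (whose Borel subgroup has index $8$), forcing $m=3$, $f=1$, $p=2$, $n=3$, $T=N=\GL_3(2)$. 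In the first two of these, Guralnick's theorem applied to $\A_8$ and to $\PSp_4(3)$ shows that neither $\GL_4(2)$ nor $\Sp_4(3)$ nor $\CSp_4(3)$ has a subgroup of the required index ($16$, resp.\ $81$), so no such $S$ exists; whereas $\GL_3(2)$ does have a subgroup of index $8=p^n$, namely the normaliser of a Sylow $7$-subgroup. Therefore $T=\GL_3(2)$ with $(n,p)=(3,2)$.

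The hard part will be this last paragraph: one must run carefully through Guralnick's list in tandem with the exceptional isomorphisms between small classical groups so that exactly the coincidence $\PSL_3(2)\cong\PSL_2(7)$ survives, and, for the near misses $\PSL_4(2)\cong\A_8$ and $\PSp_4(3)\cong\PSU_4(2)$, one must rule these out by establishing the non-existence of a subgroup of the \emph{exact} index $p^n$ rather than merely of $p$-power index.
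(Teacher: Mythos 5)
Your strategy -- reduce to the classification of transitive linear groups and then apply Guralnick's theorem to a simple section -- is exactly the deduction the paper has in mind; note, though, that the paper itself offers no proof here (the lemma is quoted from \cite[Corollary 2.12 \& Lemma 2.14]{huang2025finite}), so your proposal is being measured against the cited source rather than against an argument in the text. As a plan it is sound, but two of the concrete claims you make are false, and they are precisely the ones that carry the weight.

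First, your assertion that the entries of Table~\ref{tab:sporadic} are ``all eliminated by computing the exact power of $p$ dividing $|T|$'' fails for two of them: $2^4$ divides $|\Sp_4(2)|=|\S_6|=720$ and $2^6$ divides $|\G_2(2)|=12096$, so $T=\Sp_4(2)$ and $T=\G_2(2)$ survive the order test (they are not eliminated -- they are exactly the $(m,p^f)=(4,2)$ and $f=1$ members of conclusions (ii) and (iii), which is why those conclusions do not carry the exclusions present in Theorem~\ref{thm:translinear}(i)). This oversight then breaks the next step: for $N=\Sp_4(2)\cong\S_6$ and $N=\G_2(2)\cong\PSU_3(3).2$ the group $N$ is not quasisimple and $L=N/Z(N)$ is \emph{not} simple, so Guralnick's theorem cannot be applied to $L$ as you do. These two cases need a separate (easy) argument in the ``moreover'' part: $\S_6$ has no subgroup of index $16$ (a group of order $45$ is abelian with an element of order $15$ or $9$, and $\S_6$ has neither), and a subgroup of index $64$ in $\G_2(2)$ would force a subgroup of $2$-power index in the simple group $\PSU_3(3)$, which is absent from Guralnick's list. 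Relatedly, you pass over the exclusion of $m=2$ from family (a) with ``yield conclusions (i)--(iii) at once'', but conclusion (i) demands $m\geqs 3$, so $\SL_2(p^f)$ must actually be ruled out.

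Second, the non-solvability step cannot be completed as you describe for arbitrary $d\geqs 1$: $\GL_2(2)\cong\S_3=\GammaL_1(4)$ is a solvable transitive linear group of even order, so your claim that ``the $p$-part of a point stabiliser is too small to absorb $p^d$'' is simply wrong when $d$ is small (and indeed the statement itself fails there). Every invocation of the lemma in the paper has $p^n$ dividing $|T|$ -- this is also what the remark ``$2^4\nmid|\A_6|$'' is checking -- and under that reading both the solvable case and the $m=2$ case are disposed of correctly, since the $p$-part of $|\GammaL_1(p^n)|$, of the orders of Huppert's exceptional solvable groups, and of $|N_{\GL_{2f}(p)}(\SL_2(p^f))|$ is strictly less than $p^n$. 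You should either carry out the proof under the hypothesis $p^n\mid|T|$ or make explicit how the general-$d$ version is to be interpreted; as written, this step of your argument has no valid completion.
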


In the rest of this section, we prove the following lemma which will be used in the subsequent proof.

\begin{lemma}\label{lem:transtabindex}
    Suppose $T\leqs\GL_n(p)$ is a transitive linear group.
    If $T_v$ has a subgroup of index $p^n$ for some non-zero vector $v\in V$, then $T$ is non-solvable and one of the following statements holds.
    \begin{enumerate}
        \item $\SL_n(p)\normeq T$ with $(n,p)\in \{(3,2),(3,3),(3,5),(3,7),(3,11)\}$.
        \item $\Sp_{n}(p)\normeq T$ with $(n,p)\in \{(4,2),(4,3),(4,5),(4,7),(4,11),(6,2)\}$.
        \item $T=\G_2(2)$ with $(n,p)=(6,2)$.
    \end{enumerate}
\end{lemma}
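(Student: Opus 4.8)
The plan is to use Lemma~\ref{lem:transindexpd} to reduce $T$ to three families, then control the stabiliser of $v$ inside the normal (quasi)simple subgroup $S$ of $T$ by a $p$‑valuation count, and finally to kill the handful of leftover cases with an irreducibility argument for a natural module.

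First, since $T_v$ has a subgroup of index $p^n$ we get $p^n\mid|T_v|$, hence $p^n\mid|T|$, so Lemma~\ref{lem:transindexpd} applies: $T$ is non-solvable and, with $q=p^f$ and $S\normeq T$ the indicated normal subgroup, one of the following holds --- (a) $S=\SL_m(q)$ with $m\geqs3$, $T\leqs\GammaL_m(q)$, $n=mf$; (b) $S=\Sp_m(q)$ with $m\geqs4$, $n=mf$; (c) $S=\G_2(2^f)$, $p=2$, $n=6f$. In each case $S$ is transitive linear, $|T:S|$ divides $(q-1)f$, and so $|T:S|<p^{2f}\leqs p^n$. Now fix $K\leqs T_v$ with $|T_v:K|=p^n$ and put $S_v=S\cap T_v$; since $|T_v:S_v|\leqs|T:S|<p^n$ we have $S_v\not\leqs K$, so $K_0:=K\cap S_v<S_v$ has index a power of $p$ dividing $p^n$. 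Let $Q=O_p(S_v)$: then $Q\normeq T_v$, $|Q|=q^{m-1}$ in (a),(b) and $q^5$ in (c), $[Q,Q]\leqs Z$ for a characteristic $Z\leqs Q$ of order $1$ (case (a): $Q$ abelian) or $q$ (cases (b),(c)), and $S_v/Q\cong D$ with $D$ equal to $\SL_{m-1}(q)$, $\Sp_{m-2}(q)$ or $\SL_2(q)$ respectively and $Q/Z$ the natural $\bbF_pD$-module. Writing $M$ for the image of $K_0$ in $D$ one has $|S_v:K_0|=|D:M|\cdot|Q:Q\cap K_0|$, and since each factor of $p^n=|T_v:K|=|T_v:KS_v|\cdot|D:M|\cdot|Q:Q\cap K_0|$ is a power of $p$,
\[
n=v_p(|T_v:KS_v|)+v_p(|D:M|)+v_p(|Q:Q\cap K_0|),
\]
where $v_p$ is the exponent of $p$. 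If $M=D$ the right side is at most $v_p(f)+v_p(|Q|)<f+f(m-1)=n$ (and $<6f=n$ in case (c)) --- impossible; hence $M$ is a \emph{proper} subgroup of $D$ of $p$-power index.

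Next I would feed this into Guralnick's classification of (almost) simple groups with a prime-power-index subgroup~\cite{guralnick1983Subgroups}, applied to $D$ or to $D/Z(D)$ according as $D$ is one of the small non-quasisimple groups $\SL_2(2),\SL_2(3),\Sp_4(2)$ or not, and using the exceptional isomorphisms $\PSL_3(2)\cong\PSL_2(7)$, $\PSL_4(2)\cong\A_8$, $\PSp_4(3)\cong\PSU_4(2)$. This forces $q=p$ (so $f=1$, and $n=m$ in cases (a),(b)) and $D\cong\SL_2(p)$ with $p\in\{2,3,5,7,11\}$, or $D$ one of $\SL_3(2)$, $\SL_4(2)$, $\Sp_4(2)$, $\Sp_4(3)$; moreover a proper subgroup of $\SL_3(2)$, $\SL_4(2)$ or $\Sp_4(3)$ of $p$-power index has index $8$, $8$ or $27$ respectively. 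Tracing these back through $D=S_v/Q$: $D=\SL_2(p)$ gives $(n,p)=(3,p)$, $(4,p)$ or $(6,2)$ with $S=\SL_3(p)$, $\Sp_4(p)$, $\G_2(2)$, and $D=\Sp_4(2)$ gives $(n,p)=(6,2)$ with $S=\Sp_6(2)$ --- all appearing in parts (i)--(iii). The only leftovers are
\[
(n,p)=(4,2)\ \text{and}\ (5,2)\ \text{in case (a)},\qquad (n,p)=(6,3)\ \text{in case (b)}.
\]

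To eliminate these three cases I would argue as follows. Here $f=1$, so $|T:S|\mid p-1$ and $v_p(|T_v:KS_v|)=0$; since $D$ is $\SL_3(2)$, $\SL_4(2)$ or $\Sp_4(3)$ and $M$ is proper of $p$-power index, the displayed identity forces $v_p(|D:M|)=3$, whence $|Q\cap K_0|=|Q|/p^{n-3}$, i.e.\ $4$, $4$ or $9$ respectively. On the other hand $K_0$ normalises $Q\cap K_0$ and $Z$, so $(Q\cap K_0)Z$ is normalised by $\l K_0,Q\r$, whose image in $D$ is $M$; since $Q$ acts trivially on $Q/Z$ (as $[Q,Q]\leqs Z$), the image of $(Q\cap K_0)Z$ in $Q/Z$ is an $\bbF_pM$-submodule of the natural module $Q/Z$. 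But $M$ acts irreducibly there: the Singer normaliser $7{:}3<\SL_3(2)$ and the $2$-transitive subgroup $\A_7<\SL_4(2)=\GL_4(2)$ are irreducible on $\bbF_2^{3}$ and $\bbF_2^{4}$ (classical; see e.g.\ \cite[Proposition~4.7.3]{bray2013maximal}), and the maximal subgroup $2^{1+4}_{-}.\A_5<\Sp_4(3)$ is irreducible on $\bbF_3^{4}$. Hence $(Q\cap K_0)Z\in\{Z,Q\}$, so $|Q\cap K_0|\leqs|Z|$ or $|Q\cap K_0|\geqs|Q|/|Z|$, contradicting the values above (where $|Z|=1,1,3$ and $|Q|=8,16,243$). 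This leaves exactly the list in the statement.

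The step I expect to be the main obstacle is this final elimination, together with the extraction from Guralnick's theorem of the precise short list of the groups $D$ and of the exact $p$-power indices of their proper subgroups: the
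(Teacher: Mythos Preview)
Your argument follows the paper's overall strategy---reduce via Lemma~\ref{lem:transindexpd} to the three infinite families, pass to the quasisimple normal subgroup $S$ and its point stabiliser $S_v$, and then apply Guralnick's theorem to the quotient $D=S_v/Q$ to pin down a short list---and the candidate lists you obtain match the paper's exactly. One structural slip: in case~(b) one has $S_v/Q\cong\bbZ_{q-1}\times\Sp_{m-2}(q)$ rather than $\Sp_{m-2}(q)$ alone (compare the paper's description of $H_v$). This is harmless, since the extra cyclic factor is a $p'$-group, is therefore contained in any subgroup $M$ of $p$-power index, and does not affect irreducibility on $Q/Z$; but you should say so.

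Where you genuinely diverge from the paper is in the elimination of the three leftover cases $(n,p)\in\{(4,2),(5,2)\}$ (case~(a)) and $(n,p)=(6,3)$ (case~(b)). The paper disposes of these by a direct \Magma\ check that $T_v$ has no subgroup of index $p^n$. Your route is by hand: the $p$-valuation identity forces $|Q\cap K_0|$ to the specific intermediate value $4$, $4$, or $9$, and then the image of $(Q\cap K_0)Z$ in $Q/Z$ would be a proper nonzero $M$-submodule, contradicting irreducibility of $M$ (the Singer normaliser $7{:}3$, the $2$-transitive $\A_7$, or the transitive linear group $2^{1+4}_{-}.\A_5$) on the natural module. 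This is a clean module-theoretic replacement for the computer verification and it works. The one point that needs to be made explicit is that the $p$-power index of $M$ in $D$ is \emph{exactly} $8$, $8$, $27$ in the three cases: you must check that the relevant index-$8$ or index-$27$ maximal subgroup itself has no proper subgroup of $p$-power index, which follows from a second application of Guralnick (e.g.\ $A_7$ has no subgroup of $2$-power index, and $2^{1+4}_{-}.\A_5$ has none of $3$-power index since $3\nmid 1920/3$ would be needed and $\A_5$ has no index-$3$ subgroup).
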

\begin{proof}
    Lemma~\ref{lem:transindexpd} implies that $T$ is non-solvable and one of the following holds.
    \begin{enumerate}
        \item[(a)] $H=\SL_{n/f}(p^f)$ is normal in $T$ such that $n/f\geqs 3$.
        \item[(b)] $H=\Sp_{n/f}(p^f)$ is normal in $T$ with $n/f\geqs 4$.
        \item[(c)] $H=\G_2(2^f)$ is normal in $T$ with $(d,p)=(6f,2)$.
    \end{enumerate}
    We remark that $H$ is a transitive linear group and $T/H\lesssim\GammaL_1(p^f)$ in these three cases.
    Hence we have that
    \[T_v/H_v=T_v/(H\cap T_v)\cong HT_v/H\leqslant T/H\lesssim\GammaL_1(p^f)\]
    Then it follows that $|T_v/H_v|$ divides $(p^f-1)f$, and hence it is not divisible by $p^n$ as $f<p^f<p^n$.
    Recall that $T_v$ has a subgroup of index $p^n$.
    It follows that $H_v$ has a subgroup $L$ of index $p^k$, where $k\leqs n$ and $p^{n-k}\leqs f$.
    We will verify this condition case by case, and we remind the readers that the structures of $H_v$ can be found in~\cite{wilson2009finite}.

    Assume that case~(a) holds.
    We have that
    \[H_v\cong \ASL_{d}(p^f)\cong p^{df}{:}\SL_{d}(p^f)\mbox{ with $d=\frac{n-f}{f}=\frac{n}{f}-1$}.\]
    Note that $|O_p(H_v)|=p^{df}=p^{n-f}$ and $n-f<k$ as $p^{n-k}\leqs f<p^f$.
    This implies that $LO_p(H_v)/O_p(H_v)\cong L/(L\cap O_p(H_v))$ has index $p^t$ in $H_v/O_p(H_v)\cong \SL_d(p^f)$ for some $t\geqslant 1$.
    By Guralnick's classification~\cite{guralnick1983Subgroups}, we conclude that $f=1$ (hence $H=\SL_n(p)$) and
    \[(n,p)\in \{(3,2),(3,3),(3,5),(3,7),(3,11),(4,2),(5,2)\}.\]
    We verify the existence of subgroups of index $p^n$ in $T_v$ for each case by computations in \Magma, and obtain that
    \[(n,p)\in \{(3,2),(3,3),(3,5),(3,7),(3,11)\}.\]

    Assume that case~(b) holds.
    Then
    \[H_v\cong  [p^{f+df}]{:}(\bbZ_{p^f-1}\times \Sp_{d}(p^f))\mbox{ with $d=\frac{n-2f}{f}=\frac{n}{f}-2$}.\]
    Note that $O_p(H_v)$ has order $p^{f+df}=p^{n-f}<p^k$.
    By arguments similar to case~(a), we have that $\Sp_d(p^f)$ has a subgroup of index $p^t$ for some positive integer $t$.
    Again by Guralnick's classification and calculations in~\Magma, we conclude that
    \[(n,p)\in \{(4,2),(4,3),(4,5),(4,7),(4,11),(6,2)\}.\]

    Finally, we assume that case~(c) holds.
    Then $H\cong \G_2(2^f)$, and $H_v\cong [2^{5f}]{:}\SL_2(2^f)$ has a subgroup of index $2^k>2^{n-f}=2^{5f}$.
    Similar reasoning yields that $\SL_2(2^f)$ has a subgroup of index $2^t$ for some $t\geqs 1$.
    Then it follows that $\SL_2(2^f)$ is solvable by Guralnick's classification, and hence $f=1$.
    Therefore, $T=H=\G_2(2)$ as in part~(iii).
\end{proof}

\section{Proof of Theorem~\ref{thm:nonplocal}}\label{sec:proof}

In this section, we always assume that $G<\GL(V)$ is reducible such that $V\cong \bbF_p^n$, $O_p(G)=1$ and $G$ has $3$ orbits on $V$.
Let $W\cong \bbF_p^d$ be a $G$-invariant subspace of $V$ with $0<d<n$.
By Proposition~\ref{prop:redu}\,(iii), we have that both $G^W$ and $G^{V/W}$ are transitive linear groups.
Then $G<\P[W]=Q{:}L$ with $Q=O_p(\P[W])$, and further $Q\cong W\otimes (V/W)^*$ as $\bbF_p\P[W]$-modules, see Equation~\ref{eq:PW}.

The following lemma shows that $G$ is faithful on at least one of $W$ and $V/W$.
\begin{lemma}\label{lem:kernel}
    With the above assumptions, one of the following statements holds.
    \begin{enumerate}
        \item[(A)] $G\cong G^W\cong G^{V/W}$, and hence $G_{(W)}=G_{(V/W)}=1$.
        \item[(B)] $G\cong G^{V/W}$ and $G_{(W)}\neq 1$ is almost simple.
        \item[(C)] $G\cong G^W$ and $G_{(V/W)}\neq 1$ is almost simple.
    \end{enumerate}
\end{lemma}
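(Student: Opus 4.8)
The plan is to localise the whole problem in the two kernels $G_{(W)}$ and $G_{(V/W)}$, exploiting the subdirect embedding $G\hookrightarrow G^W\times G^{V/W}$ together with the cohomology of the module $Q\cong W\otimes(V/W)^*$.

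\emph{Set-up.} By Lemma~\ref{lem:plocal}(i) we have $G_{(W)}\cap G_{(V/W)}=G\cap Q=O_p(G)=1$, so the two kernels are normal subgroups of $G$ with trivial intersection; hence they centralise one another, the natural map $G\to G^W\times G^{V/W}$ is injective, and restriction of the quotient maps induces isomorphisms of $G_{(W)}$ onto a normal subgroup $N_1\normeq G^{V/W}$ and of $G_{(V/W)}$ onto a normal subgroup $N_2\normeq G^W$. Since $O_p(G_{(W)})$ is characteristic in $G_{(W)}\normeq G$ it lies in $O_p(G)=1$, and likewise $O_p(G_{(V/W)})=1$. Because $G$ has exactly three orbits, Lemma~\ref{lem:conjLevi} (in contrapositive) gives $\H^1(G,Q)\neq 0$, where $Q\cong W\otimes(V/W)^*$ as an $\bbF_pG$-module by~\eqref{eq:PW}. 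As $G^{V/W}$ is transitive, hence irreducible, on $V/W$ and $N_1\leqs\GL(V/W)$ is faithful there, $((V/W)^*)^{N_1}$ is a $G^{V/W}$-invariant subspace of the irreducible module $(V/W)^*$, so it is $0$ whenever $N_1\neq 1$; symmetrically $W^{N_2}=0$ whenever $N_2\neq 1$. Hence, when $G_{(W)}\neq 1$ one has $\H^0(G_{(W)},Q)=W\otimes((V/W)^*)^{N_1}=0$, and when $G_{(V/W)}\neq 1$ one has $\H^0(G_{(V/W)},Q)=W^{N_2}\otimes(V/W)^*=0$.

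\emph{Both kernels cannot be nontrivial.} Suppose $G_{(W)}\neq 1$ and $G_{(V/W)}\neq 1$, and put $M=G_{(W)}\times G_{(V/W)}\normeq G$. Then $\H^0(M,Q)$ is contained in $\H^0(G_{(W)},Q)=0$, so Proposition~\ref{prop:coho}(ii) forces $\H^1(M,Q)\neq 0$. On the other hand, write $Q=W\otimes(V/W)^*$ with $G_{(V/W)}$ acting through $N_2$ on the first tensor factor and $G_{(W)}$ acting through $N_1$ on the second; since $\H^0(G_{(V/W)},W)=W^{N_2}=0$, the K\"{u}nneth formula (Proposition~\ref{prop:coho}(iii)) gives $\H^1(M,Q)\cong\H^1(G_{(V/W)},W)\otimes\H^0(G_{(W)},(V/W)^*)$; but $\H^0(G_{(W)},(V/W)^*)=((V/W)^*)^{N_1}=0$, a contradiction. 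Hence at least one kernel is trivial; $G_{(V/W)}=1$ gives $G\cong G^{V/W}$ and $G_{(W)}=1$ gives $G\cong G^W$.

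\emph{A nontrivial kernel is almost simple.} By symmetry assume $G_{(V/W)}=1$ (so $G\cong G^{V/W}$ is a transitive linear group acting faithfully on $V/W$, while $G^W\cong G/G_{(W)}$ is transitive linear on $W$) and $G_{(W)}\neq 1$. From $\H^1(G,Q)\neq 0$, $\H^0(G_{(W)},Q)=0$ and Proposition~\ref{prop:coho}(ii) we get $\H^1(G_{(W)},Q)\neq 0$; as $G_{(W)}$ acts trivially on $W$, the module $Q$ is a direct sum of $\dim W$ copies of $(V/W)^*$ over $G_{(W)}$, so $\H^1(G_{(W)},(V/W)^*)\neq 0$, and in particular $p\mid|G_{(W)}|$ by Proposition~\ref{prop:coho}(i). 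Now $O_{p'}(G_{(W)})\normeq G^{V/W}$; if it were nontrivial it would, being normal in the irreducible group $G^{V/W}$, have no nonzero fixed vector on $V/W$, whence $((V/W)^*)^{O_{p'}(G_{(W)})}=0$ and Proposition~\ref{prop:coho}(i)(ii) would force $\H^1(G_{(W)},(V/W)^*)=0$, a contradiction; thus $O_{p'}(G_{(W)})=1$. Together with $O_p(G_{(W)})=1$ this yields $\F(G_{(W)})=1$, so $G_{(W)}$ is non-solvable and $\F^*(G_{(W)})=E_1\times\dots\times E_k$ is a direct product of non-abelian simple groups with $C_{G_{(W)}}(\F^*(G_{(W)}))=1$. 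Finally, $G_{(W)}\cong N_1\normeq G^{V/W}$, and by the classification of non-solvable transitive linear groups (Theorem~\ref{thm:translinear} together with Table~\ref{tab:sporadic}) such a group has a single non-abelian composition factor, occurring with multiplicity one; hence $k=1$, $\F^*(G_{(W)})$ is non-abelian simple, and $G_{(W)}$ embeds into $\Aut(\F^*(G_{(W)}))$, i.e.\ $G_{(W)}$ is almost simple. This gives case~(B); the case $G_{(W)}=1\neq G_{(V/W)}$ gives case~(C) by the symmetric argument, and $G_{(W)}=G_{(V/W)}=1$ gives case~(A).

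\emph{Main obstacle.} The delicate points are the K\"{u}nneth bookkeeping in the second step --- keeping track of which factor of $M$ acts on which tensor factor of $Q$, and the dualisation $((V/W)^*)^{N_1}=0\iff(V/W)^{N_1}=0$ --- and the composition-factor statement used at the end, which has to be verified entry by entry against the three infinite families of Theorem~\ref{thm:translinear} and against Table~\ref{tab:sporadic}. I expect that inspection to be the only genuinely laborious part: once it is in place, the almost-simplicity of $G_{(W)}$ follows formally from $\F(G_{(W)})=1$ and the uniqueness of the non-abelian composition factor.
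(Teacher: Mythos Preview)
Your proof is correct and follows essentially the same approach as the paper: both obtain $\H^1(G,Q)\neq 0$ from Lemma~\ref{lem:conjLevi}, then apply Proposition~\ref{prop:coho} (restriction to a normal subgroup and the K\"unneth formula) to rule out the case of two nontrivial kernels and to force a nontrivial kernel to have trivial Fitting subgroup, finally invoking the composition-factor structure of transitive linear groups from Theorem~\ref{thm:translinear} to deduce almost simplicity. The only cosmetic difference is that you argue directly with $O_{p'}(G_{(W)})$ and $\F^*(G_{(W)})$, whereas the paper reaches the same conclusion by passing to $O_q(G_{(W)})$ for a prime $q\neq p$.
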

\begin{proof}
    First, we prove that one of $G_{(W)}$ and $G_{(V/W)}$ is trivial.
    Suppose that $G_{(W)}\neq 1$ and $G_{(V/W)}\neq 1$.
    Since $G_{(W)}\cap G_{(V/W)}=O_p(G)=1$ by Lemma~\ref{lem:plocal}\,(i), we have that $K:=\langle G_{(W)},G_{(V/W)} \rangle$ is a direct product $G_{(W)}\times G_{(V/W)}$.
    Note that
    \[(G_{(W)})^{V/W}\cong G_{(W)}/(G_{(W)}\cap G_{(V/W)})\cong G_{(W)}.\]
    This shows that $G_{(W)}$ acts faithfully on $V/W$.
    Similarly, we have that $G_{(V/W)}$ acts faithfully on $W$.
    Now we show that $|\H^0(K,Q)|=1$.
    Recall that
    \[|\H^0(K,Q)|=|\H^0(G_{(W)}\times G_{(V/W)},W\otimes(V/W)^*)|.\]
    Since $G^{W}$ is irreducible on $W$ and $1\neq (G_{(V/W)})^{W}\unlhd G^{W}$, we have that $G_{(V/W)}$ has no fixed points on $W\setminus\{0\}$.
    Then the zero vector of $W\otimes (V/W)^*$ is the unique fixed point of $G_{(V/W)}$.
    This yields that
    \[\H^0(K,Q)\leqs|\H^0(G_{(V/W)},W\otimes(V/W)^*)|=1.\]
    By Proposition~\ref{prop:coho}\,(ii) and (iii), we have that
    \[\begin{aligned}
        |\H^1(G,Q)|&\leqs|\H^1(K,Q)|=|\H^1(G_{(W)},(V/W)^*)\otimes \H^0(G_{(V/W)},W)|\\
        &=|\H^1(G_{(W)},(V/W)^*)\otimes 0|=1.
    \end{aligned}\]
    This implies that $G$ has at least $4$ orbits on $V$ by Lemma~\ref{lem:conjLevi}, a contradiction.

    Since at least one of $G_{(W)}$ and $G_{(V/W)}$ is trivial, we only need to show that $G_{(W)}$ (or $G_{(V/W)}$) is almost simple if it is non-trivial.
    The proofs for these two cases are similar, and we only verify the case where $G_{(W)}\neq 1$.

    Assume that $G_{(W)}\neq 1$.
    Then $G$ acts faithfully on $V/W$, and hence $G\cong G^{V/W}$ has a unique non-abelian simple composition factor as deduced from Theorem~\ref{thm:translinear}.
    It follows that $G_{(W)}$ has at most one non-abelian simple minimal normal subgroup.
    Remark that if $G_{(W)}$ has no solvable normal subgroups, then $G_{(W)}$ has a unique minimal normal subgroup which is non-abelian simple, and hence $G_{(W)}$ is almost simple.
    Thus it suffices to show that $G_{(W)}$ has no solvable normal subgroups.

    Suppose that $G_{(W)}$ has a solvable normal subgroup $S$.
    Then there exists a prime $q$ such that $O_q(S)\neq 1$.
    This yields that $O_q(G_{(W)})\neq 1$ as $O_q(S)\lhd G_{(W)}$.
    Let $T=O_q(G_{(W)})$.
    Note that $T\lhd G$ and $G\cong G^{V/W}$ is a transitive linear group on $V/W$.
    Then the zero vector of $V/W$ is the unique fixed point of $T$.
    Since $T\unlhd G_{(W)}$, Proposition~\ref{prop:coho} implies that
    \[\begin{aligned}
        |\H^1(G,Q)|&=|\H^1(G,W\otimes(V/W)^*)|\leqs|\H^1(T,W\otimes(V/W)^*)|=|\H^1(1\times T,W\otimes(V/W)^*)|\\
        &=|\H^1(T,(V/W)^*)\otimes \H^0(1,W)|=|0\otimes W|=1.
    \end{aligned}\]
    This shows that $G$ has at least $4$ orbits on $V$ by Lemma~\ref{lem:conjLevi}, a contradiction.
    The proof is complete.
\end{proof}

First, we show that there is no group $G$ satisfying Case~(A) of Lemma~\ref{lem:kernel}.

\begin{lemma}\label{lem:bothfaith}
    Exactly one of $G_{(W)}$ and $G_{(V/W)}$ is trivial.
\end{lemma}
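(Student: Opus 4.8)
The plan is to use Lemma~\ref{lem:kernel} to reduce the statement to a single case. Since in Cases~(B) and~(C) precisely one of $G_{(W)},G_{(V/W)}$ is trivial, while in Case~(A) both are, the assertion ``exactly one of $G_{(W)},G_{(V/W)}$ is trivial'' is equivalent to: Case~(A) of Lemma~\ref{lem:kernel} does not occur. So I would assume for a contradiction that $G\cong G^W\cong G^{V/W}$ with $G_{(W)}=G_{(V/W)}=1$, and aim to show that $G$ then has at least $4$ orbits on $V$.

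First I would record a divisibility consequence. As $G$ has $3$ orbits on $V$, Lemma~\ref{lem:crit} gives a vector $v\in V\setminus W$ for which $G_{\overline v}$ acts transitively on $\overline v=v+W$, a set of size $p^d$; hence $p^d\mid|G_{\overline v}|$, so $p^d\mid|G|=|G^W|=|G^{V/W}|$. Feeding $p^d\mid|G^W|$ into Lemma~\ref{lem:transindexpd} applied to $G^W\leqslant\GL_d(p)$, and $p^d\mid|G^{V/W}|$ into Lemma~\ref{lem:transindexpd} applied to $G^{V/W}\leqslant\GL_{n-d}(p)$, I obtain that both $G^W$ and $G^{V/W}$ are non-solvable transitive linear groups, each having $\SL$-type or $\Sp$-type normal subgroup or equal to $\G_2(2)$; in particular each has a unique non-abelian composition factor. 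Since $G^W\cong G\cong G^{V/W}$ these composition factors coincide, so Proposition~\ref{prop:transList}, applied with $X=G^W$ acting on $W$ and $Y=G^{V/W}$ acting on $V/W$, forces $\dim W=\dim(V/W)$, i.e.\ $n=2d$.

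With $n=2d$ in hand I would invoke Lemma~\ref{lem:transtabindex}. Under the isomorphism $G\cong G^{V/W}$ (valid because $G_{(V/W)}=1$), the subgroup $G_{\overline v}$ is the stabilizer in the transitive linear group $G^{V/W}\leqslant\GL_d(p)$ of the non-zero vector $\overline v\in V/W$, and since $G_v\leqslant G_{\overline v}$ with $[G_{\overline v}:G_v]=|\overline v|=p^d$, this vector-stabilizer has a subgroup of index $p^d=p^{\dim(V/W)}$. Hence Lemma~\ref{lem:transtabindex} applies and restricts $(d,p)$ to the finite list appearing there, so $d\in\{3,4,6\}$, $n=2d\in\{6,8,12\}$, and $G$ has $\SL_d(p)$ or $\Sp_d(p)$ as a normal subgroup, or $G=\G_2(2)$, for the listed values of $(d,p)$.

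It then remains to eliminate these finitely many configurations. In each, $G$ lies between its listed normal classical (or $\G_2$) subgroup and $\GL_d(p)$ by the classification of transitive linear groups, and each of $W$ and $V/W$ is, as a module for that normal subgroup, the natural module or its dual of dimension $d$; so $V$ ranges over an explicit finite set of $2d$-dimensional $\bbF_pG$-modules with a $d$-dimensional submodule $W$. If the extension $0\to W\to V\to V/W\to 0$ splits, then $G$ stabilizes a direct-sum decomposition $V=W\oplus U$, hence lies in the Levi subgroup $\GL(W)\times\GL(U)$, which has exactly four orbits on $V$ (see the discussion preceding Lemma~\ref{lem:conjLevi}), so $G$ has at least $4$ orbits on $V$, a contradiction. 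In the remaining non-split cases, which require $\H^1(G,W\otimes(V/W)^*)\neq0$, I would compute the relevant first cohomology groups (of natural modules and their tensor products) and verify, by a finite \Magma\ computation, that $G$ still has at least $4$ orbits on $V$. This contradicts the assumption and proves the lemma. The conceptual crux is the Proposition~\ref{prop:transList} step forcing $n=2d$, which is exactly what unlocks Lemma~\ref{lem:transtabindex}; the most laborious part — and the likeliest obstacle — is the final elimination, where the natural modules, their tensor products, and any exceptional non-split $2d$-dimensional module with exactly three orbits must all be ruled out.
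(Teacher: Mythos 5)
Your proposal is correct and follows essentially the same route as the paper: reduce to ruling out Case~(A) of Lemma~\ref{lem:kernel}, use Lemma~\ref{lem:transindexpd} for non-solvability, Proposition~\ref{prop:transList} to force $n=2d$, Lemma~\ref{lem:transtabindex} to obtain a finite list of $(d,p)$, dispose of the split case via the Levi subgroup (Lemma~\ref{lem:conjLevi}), and finish the non-split cases by computing $\H^1$ and running a \Magma\ check. The only differences are organizational: the paper first passes to the quasisimple normal subgroup $H$ (noting $G/H$ is a $p'$-group so $H$ still has $3$ orbits) and invokes a known vanishing theorem for $\H^1$ to cut the list down to $p\in\{2,3\}$ before computing, whereas you would eliminate the extra $(d,p)$ pairs by direct cohomology computation.
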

\begin{proof}
    By Lemma~\ref{lem:crit}, $G_{\overline{v}}$ acts transitively on $\overline{v}=v+W$ for some $v\in V\setminus W$.
    Since $|\overline{v}|=|W|=p^d$, we have that $G\cong G^W$ has a subgroup of index $p^d$.
    Lemma~\ref{lem:transindexpd} yields that $G$ is non-solvable.
    Since $G^W\cong G^{V/W}$, they have the same composition factors.
    By Proposition~\ref{prop:transList}, $\dim V=n=2d$ with $\dim W=\dim V/W=d$ and $G_{\overline{v}}\cong G_{w}$ for any non-zero vector $w\in W$.
    As $G_{\overline{v}}$ is transitive on $\overline{v}=v+W$, both $G_{\overline{v}}$ and $G_w$ have a subgroup of index $|\overline{v}|=p^d$.
    By Lemma~\ref{lem:transtabindex}, $G$ has a normal subgroup $H$ such that one of the following cases holds.
    \begin{enumerate}
        \item[(a)] $H\cong \SL_d(p)$ with $(d,p)\in \{(3,2),(3,3),(3,5),(3,7),(3,11)\}$.
        \item[(b)] $H\cong \Sp_{d}(p)$ with $(d,p)\in \{(4,2),(4,3),(4,5),(4,7),(4,11),(6,2)\}$.
        \item[(c)] $G=H\cong \G_2(2)$ with $(d,p)=(6,2)$.
    \end{enumerate}
    In these three cases, we observe that $H$ is transitive on both $W\setminus\{0\}$ and $(V/W)\setminus\{\overline 0\}$ and $G/H$ is a $p'$-group.
    This yields that $H_{\overline{v}}\unlhd G_{\overline{v}}$ is transitive on $\overline{v}$.
    Hence $H\leqs G$ has $3$ orbits on $V$ with $O_p(H)=1$ by Lemma~\ref{lem:crit}.
    Lemma~\ref{lem:conjLevi} implies that $\H^1(H,W\otimes (V/W)^*)\cong \H^1(H,Q)\neq 0$.
    By~\cite[Theorem 1.2.2]{group2013First}, we obtain that $p=2$ or $3$.
    Thus we conclude that $H\cong H^W\cong H^{V/W}$ acts on $V$ with $3$ orbits and is isomorphic to one of
    \[\SL_3(2),\ \SL_3(3),\ \Sp_4(2),\ \Sp_4(3),\ \Sp_6(2)\mbox{ and }\G_2(2).\]
    Now, we use \Magma\ to compute such groups $H$ with the following method:
    \begin{enumerate}
        \item[step 1:] for each $H$ above, compute the set $\mathcal{V}$ consisting of irreducible $\bbF_pH$-modules of dimension $d$ on which $H$ is a transitive linear group;
        \item[step 2:] for $(W,U)\in\mathcal{V}^2$,  construct the direct sum $W\oplus U$, and then obtain the corresponding matrix group:
        \[M_1=\left\{\begin{pmatrix}A&0\\0&A^\sigma \end{pmatrix}:A\in H^W\right\},\mbox{ where $\sigma:H^{W}\rightarrow H^{V/W}$ is an isomorphism};\]
        \item[step 3:] construct $M_2=\langle M_1,Q\rangle$ as follows:
        \[M_2=\left\{\begin{pmatrix}A&C\\0&A^\sigma \end{pmatrix}:A\in H^W\mbox{ and }C\in\mathrm{M}_{d\times d}(\bbF_p)\right\};\]
        \item[step 4:] obtain all subgroups isomorphic to $H$ in $M_2$ and calculate their orbits.
    \end{enumerate}
    We conclude the result as follows.
    \begin{enumerate}
        \item[(1)] If $H\cong \SL_3(2)$, then $|\mathcal{V}|=2$.
        When $W\cong V/W$, there is a unique conjugacy class of subgroups isomorphic to $H$ in $M_2$;
        when $W\ncong V/W$, there are two conjugacy classes of subgroups isomorphic to $H$ in $M_2$.
        All such subgroups have $5$ orbits on $V$.
        \item[(2)] If $H\cong \SL_3(2)$, then $|\mathcal{V}|=2$.
        Whenever $W\cong V/W$ or $W\ncong V/W$, there is a unique conjugacy class of subgroups isomorphic to $H$ in $M_2$, and all such subgroups have $6$ orbits on $V$.
        \item[(3)] If $H\cong \Sp_4(2)$, then $|\mathcal{V}|=2$.
        When $W\cong V/W$, there are two conjugacy classes subgroups isomorphic to $H$ in $M_2$, and all such subgroups have $5$ and $6$ orbits on $V$, respectively.
        When $W\ncong V/W$, there is a unique conjugacy class of subgroups isomorphic to $H$ in $M_2$, and all such subgroups have $5$ orbits on $V$.
        \item[(4)] If $H\cong \Sp_4(3)$, then $|\mathcal{V}|=1$ and there are $3$ conjugacy class of subgroups isomorphic to $H$ in $M_2$.
        They have $6$, $6$ and $8$ orbits on $V$, respectively.
        \item[(5)] If $H\cong \Sp_6(2)$, then $|\mathcal{V}|=1$ and there is a unique conjugacy class of subgroups isomorphic to $H$ in $M_2$.
        Every such subgroup has $6$ orbits on $V$.
        \item[(6)] If $H\cong \G_2(2)$, then $|\mathcal{V}|=1$ and there are $2$ conjugacy class of subgroups isomorphic to $H$ in $M_2$.
        All subgroups have $7$ orbits on $V$.
    \end{enumerate}
    Therefore, we complete the proof by exhausting all possibilities.
\end{proof}

Recall that the group $G_2\cong\GL_3(2)$ defined in Construction~\ref{exam:two} acts on $V=\bbF_2^4$ and stabilizes a hyperplane $W$.
Then $V/W$ has dimension $1$, and hence $G_2$ acts trivially on $V/W$.
It follows that $G_2$ satisfies Case~(B) in Lemma~\ref{lem:kernel}.
The following lemma shows that it is the unique possibility for Case~(B).

\begin{lemma}\label{lem:faithfulW}
    Assume that $G_{(W)}=1$ and $G_{(V/W)}\neq 1$ is almost simple.
    Then $V=\bbF_2^4$ and $G$ is conjugate to $G_2$ defined in Construction~$\ref{exam:two}$.
\end{lemma}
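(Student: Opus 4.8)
The plan is to extract cohomological information from the $3$-orbit hypothesis, combine it with the classification of transitive linear groups to identify $G$, and then apply Lemma~\ref{prop:two-2}. Throughout, note that $G_{(W)}=1$ gives $G\cong G^W\leqs\GL(W)$, a transitive linear group on $W\cong\bbF_p^d$, and $G<\P[W]=Q{:}L$ with $Q\cong W\otimes(V/W)^*$ as $\bbF_pG$-modules. Write $S=\soc(G_{(V/W)})$, a non-abelian simple group which is characteristic in $G_{(V/W)}\normeq G$, hence normal in $G$; since $1\neq S^W\normeq G^W$ and $G^W$ is irreducible, $S$ fixes no non-zero vector of $W$, i.e.\ $\H^0(S,W)=0$. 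First I would prove $\H^1(S,W)\neq 0$: by Lemma~\ref{lem:conjLevi} the hypotheses ($3$ orbits, $O_p(G)=1$) force $\H^1(G,Q)\neq 0$; since $S\leqs G_{(V/W)}$ acts trivially on $(V/W)^*$ and $\H^0(S,W)=0$, Proposition~\ref{prop:coho}(iii) gives $\H^1(S,Q)\cong\H^1(S,W)\otimes(V/W)^*$, while Proposition~\ref{prop:coho}(ii) gives $|\H^1(G,Q)|\leqs|\H^1(S,Q)|$; hence $\H^1(S,W)\neq 0$.

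Next I would pin down $S$. By Lemma~\ref{lem:crit} some $G_{\overline v}$ ($v\in V\setminus W$) is transitive on the $p^d$-element block $\overline v$, so $p^d\mid|G|$; then Lemma~\ref{lem:transindexpd} together with Theorem~\ref{thm:translinear} shows $G$ is non-solvable, $G^{(\infty)}$ is quasisimple acting on $W$ as an $\bbF_p$-irreducible natural module, $G^{(\infty)}\in\{\SL_m(p^f)\ (m\geqs 3),\ \Sp_{2m}(p^f)\ (m\geqs 2),\ \G_2(2^f)\ (f\geqs 2),\ \PSU_3(3)\}$, $G$ has a unique non-abelian composition factor occurring once, and $G/G^{(\infty)}$ is solvable. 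Since $S$ is a composition factor of $G_{(V/W)}\leqs G$, it must be that factor; hence $G^{V/W}=G/G_{(V/W)}$ is solvable (else the factor would recur), so $G^{(\infty)}\leqs G_{(V/W)}$, and as $G^{(\infty)}$ is a non-trivial perfect normal subgroup of the almost simple group $G_{(V/W)}$ with $G_{(V/W)}/S$ solvable, $G^{(\infty)}=S$ — in particular $S$ is simple. Now $\H^1(S,W)\neq 0$ with $W$ an $\bbF_pS$-irreducible module, so \cite[Theorem~1.2.2]{group2013First} gives $p\in\{2,3\}$; as for $p=3$ the only possibility is $S=\SL_m(3^f)$ ($m\geqs 3$), whose natural module has trivial $\H^1$, we conclude $p=2$.

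It then remains to go through $S\in\{\SL_m(2^f)\ (m\geqs 3,\ \gcd(m,2^f-1)=1),\ \Sp_{2m}(2^f)\ (m\geqs 2),\ \G_2(2^f)\ (f\geqs 2),\ \PSU_3(3)\}$. For $\SL_m(2^f)$, the natural module has $\H^1\neq 0$ only when $(m,2^f)=(3,2)$, giving $S=\SL_3(2)$, $W=\bbF_2^3$. For the symplectic and $\G_2$-type groups, $W$ is (a subfield form of) a section of an orthogonal space in characteristic $2$, and one checks that every admissible extension $V$ of the trivial module by $W$ affords at least $4$ orbits of $G$: such a group has at least two orbits on the quadratic forms polarising its invariant bilinear form, and these are not fused by the field automorphisms or scalars available in $G$, so $V\setminus W$ breaks into $\geqs 2$ orbits of $G$; for example when $S=\Sp_{2m}(2)$ one has $\dim(V/W)=1$ and $V$ is $W\oplus\bbF_2$ or the dual of the $(2m{+}1)$-dimensional orthogonal module, and in the latter case the orbits of $\Sp_{2m}(2)$ on $V$ are $\{0\}$, the non-zero vectors of the symplectic submodule, and the two Witt-type orbits of sizes $2^{m-1}(2^m\pm1)$. (Equivalently, the remaining finitely many configurations in each family — $\dim(V/W)$ being bounded since $G/S$ is small — may be checked in \Magma\ as in the proof of Lemma~\ref{lem:bothfaith}.) This leaves only $S=\SL_3(2)$, $W=\bbF_2^3$.

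Finally, $\SL_3(2)=G^{(\infty)}\leqs G\leqs\GL(W)=\GL_3(2)$ forces $G=\GL(W)\cong\GL_3(2)$, hence $G_{(V/W)}\supseteq G^{(\infty)}=G$ gives $G_{(V/W)}=G$, $G^{V/W}=1$, $\dim(V/W)=1$; so $V\cong\bbF_2^4$ with $G\cong\GL_3(2)$ stabilising the hyperplane $W$. Then $G$ is conjugate in $\GL_4(2)$ into the maximal parabolic $\P[W]$, and by (the proof of) Lemma~\ref{prop:two-2}(ii) the subgroups of $\GL_4(2)$ isomorphic to $\GL_3(2)$ that lie in such a parabolic form exactly two conjugacy classes, represented by $G_2$ and $L$; since $L$ has $4$ orbits on $V$ by Lemma~\ref{prop:two-2}(i) whereas $G$ has $3$, we conclude that $G$ is conjugate to $G_2$. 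I expect the main obstacle to be the third step — specifically, the uniform elimination of the symplectic and $\G_2$ families over proper extension fields $\bbF_{2^f}$, where cohomology alone does not bound $f$ and one must argue directly via the orthogonal module.
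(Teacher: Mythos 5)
Your first two steps are sound and your final identification step matches the paper, but there is a genuine gap in the middle: the elimination of the symplectic and $\G_2$ families. These are infinite families in \emph{both} parameters ($m$ and $f$), so your parenthetical fallback of ``checking the remaining finitely many configurations in \Magma'' is not available — bounding $\dim(V/W)$ does not bound $m$ or $f$. What remains is your uniform claim that $S=\Sp_{2m}(2^f)$ or $\G_2(2^f)$ has at least two orbits on each nonsplit coset $v+W$ (via quadratic forms polarising the invariant form) which are not fused in $G$. For $\Sp_{2m}(q)$ this can in fact be completed — the two $S$-orbits on such a coset have the distinct sizes $\tfrac12 q^m(q^m+1)$ and $\tfrac12 q^m(q^m-1)$, and since they are the $S$-orbits inside a single block $\overline{v}$ of $G_{\overline{v}}$, transitivity of $G_{\overline{v}}$ on $\overline{v}$ would force them to be $G_{\overline{v}}$-conjugate and hence of equal size — but you assert rather than prove this, and for $\G_2(2^f)$ (where the relevant coset is governed by the $7$-dimensional orthogonal module restricted to $\G_2$) you give no argument at all. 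As written, the proof does not close.

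The paper avoids this entire analysis. Having established (as you do) that $G$ has a unique non-abelian composition factor, hence $G^{V/W}$ is solvable and $G^{(\infty)}\leqs G_{(V/W)}$, it observes that $G^{(\infty)}$ then fixes every coset $v+W$ setwise, so $G^{(\infty)}\leqs G_{\overline{v}}$. Consequently $G_{\overline{v}}$ is itself a transitive linear group on $W$ (it contains $G^{(\infty)}$, which is transitive on $W\setminus\{0\}$) and, being transitive on the $p^d$ points of $\overline{v}$, it has a subgroup of index $p^d=|W|$. The ``Moreover'' clause of Lemma~\ref{lem:transindexpd} then gives $G_{\overline{v}}=\GL_3(2)$ and $(d,p)=(3,2)$ in one stroke, whence $G=\GL_3(2)$, $G_{(V/W)}=G$, $\dim V/W=1$, and Lemma~\ref{prop:two-2} finishes as in your last paragraph. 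In particular, no cohomology computation and no module-extension or orbit analysis for the classical families is needed; the index-$p^d$ condition is simply applied a second time, to $G_{\overline{v}}$ instead of $G$. If you want to keep your cohomological route, you must supply the missing uniform orbit-counting argument for $\Sp_{2m}(2^f)$ and $\G_2(2^f)$; otherwise, replace your third step with the observation that $G^{(\infty)}\leqs G_{\overline{v}}$ and invoke Lemma~\ref{lem:transindexpd} again.
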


\begin{proof}
    Note that $G_{\overline{v}}$ is transitive on $\overline{v}$ of degree $p^d=|\overline{v}|$ by Lemma~\ref{lem:crit}.
    It follows that the order of $G^W\cong G$ is divisible by $p^d$.
    Since $G^W$ is a transitive linear group, we have that $G$ is non-solvable and $G^{(\infty)}$ is a transitive linear group on $W$ by Lemma~\ref{lem:transindexpd}.
    Recall that $G\cong G^W$ has a unique non-abelian simple composition factor deduced by Theorem~\ref{thm:translinear}.
    It follows that $G^{V/W}\cong G/G_{(V/W)}$ is solvable.
    Hence $G^{(\infty)}\leqs G_{(V/W)}$, that is, $G^{(\infty)}$ acts trivially on $V/W$.
    It follows that $G^{(\infty)}$ fixes $\overline{v}$.
    Recall that $G^{(\infty)}$ is transitive on $W\setminus\{0\}$.
    So $G_{\overline{v}}\geqslant G^{(\infty)}$ is also transitive on $W\setminus\{0\}$.
    As $G_{\overline{v}}$ has a subgroup of index $p^d$, we have that $W=\bbF_2^3$ and $G_{\overline{v}}\cong \GL_3(2)$ by Lemma~\ref{lem:transindexpd}.
    Since $G\cong G^W\leqslant\GL_3(2)$, we have that $G=G_{\overline{v}}=\GL_3(2)$.
    Recall that $G_{(V/W)}\normeq G$ is almost simple, it follows that $G_{(V/W)}=G$, and then $V=\bbF_2^4$ and $\dim V/W=1$.
    Hence $G\cong \GL_3(2)$ is a subgroup of $\GL(V)\cong \GL_4(2)$ stabilizing the hyperplane $W$ and has $3$ orbits on $V$.
    Lemma~\ref{prop:two-2} yields that $G$ is conjugate to $G_2$ defined in Construction~\ref{exam:two}.
\end{proof}

From now, we focus on Case~(C) in Lemma~\ref{lem:kernel} where $G$ acts faithfully on $V/W$.
First, we prove that $G\cong G^{V/W}$ is a member of the three infinity families of non-solvable groups given in Theorem~\ref{thm:translinear}.

\begin{lemma}\label{lem:reduceclass}
    Assume that $G_{(V/W)}=1$ and $G_{(W)}\neq 1$ is almost simple.
    Then $G\cong G^{V/W}$ is a group satisfying Theorem~{\rm\ref{thm:translinear}\,(i)}
\end{lemma}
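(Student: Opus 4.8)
The plan is to rule out the sporadic cases and the case where $G^{V/W}$ lies in a proper extension beyond the three infinite families, using a cohomological argument driven by Lemma~\ref{lem:conjLevi}. Since $G$ has $3$ orbits on $V$, Lemma~\ref{lem:crit} gives that $G_{\overline{v}}$ is transitive on $\overline{v}=v+W$, so $p^d$ divides $|G^W|\cong|G|$; since $G_{(V/W)}=1$, also $G\cong G^{V/W}$ is a transitive linear group on $V/W$, whence $p^d \mid |G^{V/W}|$ as well. Applying Lemma~\ref{lem:transindexpd} to the transitive linear group $G^{V/W}$ tells us $G^{V/W}$ is non-solvable and $G^{(\infty)}$ is one of $\SL_m(p^f)$, $\Sp_m(p^f)$ or $\G_2(2^f)$. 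The point is to eliminate the remaining possibility that $G^{V/W}$ itself sits in case~(ii) of Theorem~\ref{thm:translinear}, i.e.\ that $G^{V/W}$ is one of the finitely many sporadic transitive linear groups in Table~\ref{tab:sporadic}; once these are excluded, $G\cong G^{V/W}$ necessarily satisfies Theorem~\ref{thm:translinear}\,(i).

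The key mechanism is this: if we can show $\H^1(G,Q)=0$ where $Q\cong W\otimes(V/W)^*$, then Lemma~\ref{lem:conjLevi} forces $G$ to have at least $4$ orbits on $V$, contradicting the standing hypothesis. So the first step is to get a cohomological handle on $Q$. Write $S=G_{(W)}^{(\infty)}$ (or use $G_{(W)}$ itself, or $\Soc(G_{(W)})$) — a non-trivial normal subgroup of $G$ acting trivially on $W$ and, by the argument in the proof of Lemma~\ref{lem:kernel}, acting so that the zero vector of $V/W$ is its unique fixed point (since $G_{(W)}$ maps onto a nontrivial normal subgroup of the irreducible group $G^{V/W}$, hence has no nonzero fixed vector on $V/W$, hence none on $(V/W)^*$ either). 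Then $\H^0(G_{(W)},(V/W)^*)=0$, and the K\"unneth formula (Proposition~\ref{prop:coho}\,(iii)), applied with the factorisation coming from $G_{(W)}$ acting trivially on $W$, together with Proposition~\ref{prop:coho}\,(ii), gives
\[
|\H^1(G,Q)| \leqs |\H^1(G_{(W)},\,(V/W)^*\otimes W)| = |\H^1(G_{(W)},(V/W)^*)\otimes \H^0(1,W)|.
\]
The obstruction to concluding $\H^1(G,Q)=0$ outright is the factor $\H^1(G_{(W)},(V/W)^*)$: it need not vanish. So the real content is to show that, for the sporadic candidates for $G^{V/W}$ in Table~\ref{tab:sporadic}, either $\H^1(G^{V/W}, (V/W)^*)=0$ (which already kills the case), or — when this first cohomology is nonzero — a direct inspection (using the possible $\bbF_p G$-module structures on $W$, together with the constraint $\dim W = \dim(V/W)$ forced by comparing composition factors via Proposition~\ref{prop:transList}, and the constraint $p^d\mid|G|$) rules the configuration out by a small \Magma\ computation or by the vanishing theorem of \cite{group2013First}.

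I expect the main obstacle to be the finite list of sporadic cases in Table~\ref{tab:sporadic} where $\H^1$ of the natural module is genuinely non-zero (the characteristic $2$ and $3$ entries such as $(4,2)$ with $\A_7$ or $\Sp_4(2)'$, $(4,3)$, $(6,2)$) — these must be handled by an explicit module-theoretic or computational check rather than by a uniform argument, much as in the proof of Lemma~\ref{lem:bothfaith}. One also has to be slightly careful that $G_{(W)}$ being \emph{almost simple} (rather than simple) does not spoil the cohomological reduction; here Proposition~\ref{prop:coho}\,(ii) with $N=\Soc(G_{(W)})$ and the fact that $\Soc(G_{(W)})$ still has no nonzero fixed point on $V/W$ does the job. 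After these eliminations, only groups with $G^{(\infty)}\in\{\SL_m(p^f),\Sp_m(p^f),\G_2(2^f)\}$ and $G=G^{V/W}$ a transitive linear group survive, i.e.\ $G$ satisfies Theorem~\ref{thm:translinear}\,(i), which is the assertion.
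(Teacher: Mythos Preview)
Your proposal has two genuine gaps that block it from going through as written.

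First, the invocation of Lemma~\ref{lem:transindexpd} for $G^{V/W}$ is not justified. As the remark preceding that lemma makes clear (it excludes $\A_6$ and $\PSU_3(3)$ precisely because $2^4\nmid|\A_6|$ and $2^6\nmid|\PSU_3(3)|$), the hypothesis needed is that $p^{n-d}$ divides $|G^{V/W}|$, where $n-d=\dim(V/W)$. You only know $p^d\mid|G|=|G^{V/W}|$, and when $d<n-d$ this is strictly weaker. For instance $\A_7\leqs\GL_4(2)$ with $\dim W=1$ satisfies $2\mid|\A_7|$ but $2^4\nmid|\A_7|$, so the lemma does not place it in one of the three families, and this case must still be handled. (Also, ``$|G^W|\cong|G|$'' is wrong here: in Case~(C) one has $G_{(W)}\neq1$, so $G^W$ is a proper quotient of $G$.)

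Second, and more seriously, the constraint $\dim W=\dim(V/W)$ you claim from Proposition~\ref{prop:transList} is false in this setting. That proposition compares two \emph{non-solvable} transitive linear groups, but here $G^W=G/G_{(W)}$ is solvable: $G\cong G^{V/W}$ has a unique non-abelian simple composition factor, and since $G_{(W)}$ is almost simple that factor already lies in $G_{(W)}$, so $G/G_{(W)}$ has none. The correct constraint in the surviving sporadic cases turns out to be $\dim W=1$, not $\dim W=\dim(V/W)$, so the \Magma\ search you outline would be carried out in the wrong dimension.

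The paper's argument is shorter and avoids cohomology. The observation you are missing is that $G^{(\infty)}\normeq G_{(W)}$ with $G_{(W)}$ almost simple forces $G^{(\infty)}$ itself to be \emph{simple}. This single stroke cuts Table~\ref{tab:sporadic} down to $G^{(\infty)}\in\{\A_6,\A_7,\PSU_3(3)\}$, all with $p=2$ and $|G/G^{(\infty)}|\leqs 2$. Since $G^W$ is a solvable transitive linear group over $\bbF_2$ and a quotient of $G/G^{(\infty)}$, it must be trivial, whence $\dim W=1$. A direct \Magma\ enumeration of subgroups isomorphic to $G$ inside $Q{:}G^{V/W}$ (now just three small cases with $|Q|\leqs 2^6$) then shows each has at least four orbits on $V$.
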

\begin{proof}
    Since $G\cong G^{V/W}$ is a transitive linear group on $V/W$, it has a unique non-abelian simple composition factor deduced by Theorem~\ref{thm:translinear}.
    Hence $G^W\cong G/G_{(W)}$ is solvable, and then $G^{(\infty)}\normeq G_{(W)}$ is simple.

    Suppose that $G^{V/W}\cong G$ is a member in part~(ii) of Theorem~\ref{thm:translinear}.
    Since $G^{(\infty)}$ is simple, one of the following cases holds by Theorem~\ref{thm:translinear}.
    \begin{enumerate}
        \item[(a)] $G^{(\infty)}\cong \A_6$ with $G/G^{(\infty)}\lesssim \bbZ_2$ and $V/W\cong\bbF_2^4$;
        \item[(b)] $G=G^{(\infty)}\cong \A_7$ with  $V/W\cong\bbF_2^4$;
        \item[(c)] $G^{(\infty)}\cong \PSU_3(3)$ with $G/G^{(\infty)}\lesssim \bbZ_2$ and $V/W\cong\bbF_2^6$.
    \end{enumerate}
    We remark that $G^W\cong G/G_{(W)}$ is a quotient of $G/G^{(\infty)}$ as $G^{(\infty)}\normeq G_{(W)}$.
    In the above three cases, $|G/G^{(\infty)}|\leqs 2$ and $p=2$.
    Thus we have that $G^W\cong G/G_{(W)}=1$, and hence $\dim W=1$.
    We can easily construct the matrix form of $\langle Q,G\rangle\leqs\P[W]$ in \Magma\ as follows:
    \[M=\left\{\begin{pmatrix}1&C\\0&B\end{pmatrix}: B\in G^{V/W}\mbox{ and }C\in\mathrm{M}_{1\times d}(\bbF_p)\right\}.\]
    The calculations show that every subgroup of $M$ isomorphic to $G$ has at least $4$ orbits on $V$.
    Therefore, $G\cong G^{V/W}$ is a group satisfying Theorem~{\rm\ref{thm:translinear}\,(i)}.
\end{proof}

Before finalizing the case $G_{(V/W)}=1$, we give the following lemma which helps reduce the possibilities of $G\cong G^{V/W}$.
\begin{lemma}\label{lem:fieldAuto}
    Assume that $G_{(V/W)}=1$ and $G_{(W)}\neq 1$ is almost simple.
    Then $(G^{(\infty)}\cap G_{\overline{v}})^{\overline{v}}\neq 1$ is a $p$-group of exponent $p$ for any $v\in V\setminus W$.
\end{lemma}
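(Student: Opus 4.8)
The plan is to treat the two assertions separately: the structural one is immediate and nontriviality is the real content. Set $S:=G^{(\infty)}$ (a perfect group, by definition). By the proof of Lemma~\ref{lem:reduceclass} we have $S\normeq G_{(W)}$, so $S$ fixes $W$ pointwise; hence for $g\in S\cap G_{\overline v}$ and $w\in W$ we get $(v+w)^g=v^g+w$ with $v^g-v\in W$, i.e.\ $g$ acts on $\overline v=v+W$ as the translation by $v^g-v$, and this translation value is independent of the chosen lift of the coset. Thus $(S\cap G_{\overline v})^{\overline v}$ embeds into $(W,+)$, which already shows it is elementary abelian, in particular a $p$-group of exponent dividing $p$; and since $S^{V/W}$ is transitive on the nonzero cosets, the order of $(S\cap G_{\overline v})^{\overline v}$ is independent of $v$, so it suffices to derive a contradiction from the assumption $(S\cap G_{\overline v})^{\overline v}=1$ for a single $v\in V\setminus W$.

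So I would argue by contradiction. If $(S\cap G_{\overline v})^{\overline v}=1$, the translation homomorphism above is trivial, so $S_{\overline v}=S_v$; since $S^{V/W}$ is a transitive linear group (Lemma~\ref{lem:reduceclass}, Theorem~\ref{thm:translinear}), this gives $|v^S|=|S:S_{\overline v}|=p^{n-d}-1$. As $G$ is transitive on $V\setminus W$ and normalises $S$, it permutes the $S$-orbits on $V\setminus W$ transitively; hence these all have size $p^{n-d}-1$, and there are exactly $p^{d}$ of them. Since $S$ fixes $W$ pointwise, every $S$-orbit meeting $\overline v$ has the form $v^S+w$ with $w\in W$, so the $p^{d}$ orbits are precisely the $p^{d}$ translates $v^S+w$, pairwise distinct. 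Writing $(v^g)^S=v^S+\widetilde w(g)$ for the resulting well-defined $\widetilde w(g)\in W$, and using $(v^S)^g=(v^g)^S$, one checks that $G$ acts on $\{v^S+w:w\in W\}\cong W$ by the affine maps $w\mapsto\widetilde w(g)+w^g$. This produces a transitive subgroup $\overline G\leqs\AGL(W)$ whose linear part is $G^W$ and whose translation subgroup is the image of the homomorphism $w\colon G_{(W)}\to(W,+)$, $g\mapsto\widetilde w(g)$.

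Next I would use irreducibility. By Proposition~\ref{prop:redu}\,(iii), $G^W$ is irreducible on $W$, so the translation subgroup of $\overline G$, being $G^W$-invariant, is $0$ or $W$; it is not $0$, since a transitive subgroup of $\AGL(W)$ with $|W|>1$ cannot be linear. Hence $w$ is surjective. As $S$ is perfect and $W$ is abelian, $S\leqs\ker w$, so $w$ factors through $G_{(W)}/S$; and $G_{(W)}$ is almost simple (by hypothesis) with socle $S$ (proof of Lemma~\ref{lem:reduceclass}), so $G_{(W)}/S\hookrightarrow\Out(S)$. Therefore the elementary abelian group $W$, of order $p^{d}$, is a quotient of a subgroup of $\Out(S)$, hence a quotient of a Sylow $p$-subgroup of $\Out(S)$.

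Finally I would close using the structure of $\Out(S)$ together with cohomology. By Lemma~\ref{lem:reduceclass}, $S$ is one of $\SL_m(p^f)$, $\Sp_{2m}(p^f)$, $\G_2(2^f)$, and for these the $p$-part of $\Out(S)$ comes from the cyclic field-automorphism group together with a graph automorphism of order at most $2$ (occurring for $p=2$ only in type $A$); so a Sylow $p$-subgroup of $\Out(S)$ is cyclic for $p$ odd and has $p$-rank at most $2$ for $p=2$. This forces $d\leqs 2$, and $d=1$ forces $p\mid|\Out(S)|$, hence $p\mid f$. To discard the remaining configurations I would invoke that $G$ has exactly $3$ orbits: Lemma~\ref{lem:conjLevi} gives $\H^1(G,Q)\neq 0$, and since $Q\cong W\otimes(V/W)^*$ with $S$ trivial on $W$ we have $\H^0(S,Q)=0$, so Proposition~\ref{prop:coho}\,(ii) yields $\H^1(S,(V/W)^*)\neq 0$. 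In each surviving case ($S$ on its natural module, $d\in\{1,2\}$, and $p\mid f$ when $p$ is odd) this non-vanishing fails---by the first-cohomology tables in~\cite{group2013First}, or a direct computation in \Magma---which is the desired contradiction. I expect this last step to be the main obstacle: the reduction to $d\leqs 2$ is painless, but identifying and excluding the few leftover pairs $(S,V/W)$---in particular the $p=2$, $d=2$ case, which can occur only for $S=\SL_m(2^f)$ with $f$ even---is where the real work lies.
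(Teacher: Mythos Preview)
Your approach differs substantially from the paper's. Both begin with the same observation that $(S\cap G_{\overline v})^{\overline v}$ embeds in $(W,+)$ via translations. For nontriviality, the paper proceeds in three steps: first it shows directly (using Lemma~\ref{lem:transindexpd} on $(G_{\overline v})^W$) that $(G_{(W)}\cap G_{\overline v})^{\overline v}\neq 1$; then it proves that a generator $x$ of a Sylow $p$-subgroup of $G_{(W)}/S$ can, after conjugation, be taken to lie in $\GL(U)$ for a complement $U$ of $W$---this is where Hilbert~90 enters, to force $\H^1(\langle x\rangle,Q)=0$---so that $x$ fixes some coset $\overline u$ pointwise; finally $(S\cap G_{\overline u})^{\overline u}=(G_{(W)}\cap G_{\overline u})^{\overline u}\neq 1$. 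Your route---turning the contradiction hypothesis into a transitive action of $G$ on the set of $S$-orbits in $V\setminus W$, and hence a surjection $G_{(W)}/S\twoheadrightarrow W$---is an elegant alternative and yields the constraint $p\mid f$ cleanly. One sharpening: you actually get $d=1$, not merely $d\leqs 2$, because the graph automorphism of $\SL_m$ does not preserve the natural module; thus $G_{(W)}/S$ embeds in $N_{\GL(V/W)}(S)/S\leqs\GammaL_m(q)/\SL_m(q)$ (and analogously for $\Sp$, $\G_2$), whose Sylow $p$-subgroup is cyclic.

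The genuine gap is your closing step. You need $\H^1(S,(V/W)^*)=0$ for \emph{every} simple $S$ in the three infinite families with $p\mid f$, and neither ``cohomology tables'' nor a \Magma\ computation can cover an infinite list. The claim is plausible---the exceptions one knows for $\H^1$ of the natural module, such as $\SL_3(2)$ and $\Sp_6(2)$, all have $f=1$---but it requires a general vanishing theorem, and you have not pointed to one (the cited \cite{group2013First} does not obviously supply it in this form). The paper sidesteps this completely: its Hilbert~90 argument is exactly what replaces your global cohomological input by an explicit element of $G_{(W)}\setminus S$ fixing a coset pointwise, so that the passage from $G_{(W)}$ down to $S$ costs nothing. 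If you want to salvage your line, you either need that vanishing theorem, or you can graft the paper's Step~2 onto your setup after reaching $d=1$ and $p\mid f$.
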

\begin{proof}
    Suppose that $h\in G^{(\infty)}\cap G_{\overline{v}}$.
    There exists an element $w_h\in W$ such that $v^h=w_h+v$.
    Note that $G^{(\infty)}\unlhd G_{(W)}$ is non-abelian simple.
    For $h\in G^{(\infty)}\cap G_{\overline{v}}$, we have that $h$ fixes every $w\in W$.
    It follows that $(w+v)^{h^p}=w+p\cdot w_h+v=w+v$, and hence $h^p$ acts trivially on $\overline{v}=v+W$.
    Thus $(G^{(\infty)}\cap G_{\overline{v}})^{\overline{v}}$ is either trivial or a $p$-group of exponent $p$.

    \textbf{Step 1.}
    We claim that $(G_{(W)}\cap G_{\overline{v}})^{\overline{v}}\neq 1$ for any $v\in V\setminus W$.

    By Lemma~\ref{lem:reduceclass}, $G\cong G^{V/W}$ satisfies Theorem~\ref{thm:translinear}\,(i).
    Then $G^{(\infty)}$ is also transitive on non-zero vectors of $V/W$.
    Hence we have that $G=G^{(\infty)}G_{\overline{v}}$.
    Note that
    \[G^W\cong G/G_{(W)}=G^{(\infty)}G_{\overline{v}}/G_{(W)}=G_{(W)}G_{\overline{v}}/G_{(W)}\cong G_{\overline{v}}/(G_{(W)}\cap G_{\overline{v}})\cong (G_{\overline{v}})^W.\]
    Then $G_{\overline{v}}$ is transitive on $W\setminus\{0\}$.
    Suppose that $G_{(W)}\cap G_{\overline{v}}$ acts trivially on $\overline{v}$.
    By Lemma~\ref{lem:crit}, $G_{\overline{v}}$ acts transitively on $\overline{v}$.
    Thus $(G_{\overline{v}})^W$ has a transitive action on $|\overline{v}|=p^d$ points.
    Hence the transitive linear group $(G_{\overline{v}})^W$ has a subgroup of index $p^d$.
    By Lemma~\ref{lem:transindexpd}, we obtain that $G^W\cong (G_{\overline{v}})^W\cong\GL_3(2)$ is non-solvable.
    Recall that $G$ has a unique non-abelian simple composition factor and $G_{(W)}$ is almost simple.
    This leads to a contradiction.
    Thus we have that $(G_{(W)}\cap G_{\overline{v}})^{\overline{v}}\neq 1$.

    \textbf{Step 2.}
    There exist some $v\in V\setminus W$ and $x\in G_{(W)}\setminus G^{(\infty)}$ such that $x\in G_{(\overline{v})}$ and $\langle x\rangle G^{(\infty)}$ is a Sylow $p$-subgroup of $G_{(W)}/G^{(\infty)}$.

    If $G_{(W)}/G^{(\infty)}$ has order coprime with $p$, then we can choose $x=1$.
    Now we can suppose that $G_{(W)}/G^{(\infty)}$ has a non-trivial Sylow $p$-subgroup.
    Recall from Lemma~\ref{lem:reduceclass} that $G_{(W)}$ is an almost simple group and is a transitive linear group on $V/W$ in Theorem~\ref{thm:translinear}\,(i).
    The discussions of the three cases are similar, so we only give a proof for the case where $G^{(\infty)}\cong \SL_{m}(p^f)$ with $mf=n-d$.
    For a complement $U$ of $W$ in $V$, let $L\leqslant \GL(U)$ such that $\langle Q,G_{(W)}\rangle\leqslant \langle Q,L\rangle$ with $L\cong \GammaL_m(p^f)$.
    Let $\phi,\delta\in L$ be such that $L=\langle L^{(\infty)},\phi,\delta\rangle$, where $\delta$ admits a diagonal automorphism and $\phi$ admits a field automorphism.
    We may assume that the action of $\langle\phi\rangle$ on $U\cong\bbF_p^{mf}$ is equivalent of the action of $\Gal(\bbF_{p^f}/\bbF_p)$ on $\bbF_{p^f}^m$.
    Let $L_0\leqslant L$ be such that $\langle Q,G_{(W)}\rangle=\langle Q,L_0\rangle$.
    Since $\gcd(|\delta|, p)=1$, a Sylow subgroup of $L_0/L^{(\infty)}$ is conjugate (by some $y\in L$) to $\langle \phi^k\rangle L^{(\infty)}$ for some $k\geqslant 1$.
    Hence we may assume that $\langle \phi^k\rangle L^{(\infty)}$ is a Sylow $p$-subgroup of $L_0/L^{(\infty)}$ by replacing $G$ with $G^y$.
    Then $\phi^k\in L_0\leqs \langle Q,G_{(W)}\rangle$.
    Note that $\langle Q,G_{(W)}\rangle=Q{:}G_{(W)}$ since $Q$ is normal and $Q\cap G_{(W)}\leqs Q\cap G=O_p(G)=1$.
    It follows that there exists some $z\in Q$ such that $z\phi^k\in G_{(W)}$.
    Note that $\langle Q, z\phi^k\rangle =Q{:}\langle z\phi^k\rangle=Q{:}\langle \phi^k\rangle$ since $\langle z\phi^k\rangle\leqslant G_{(W)}$ and $G_{(W)}\cap Q=1$.
    By Hilbert 90 (see~\cite[Page 97]{morandi1996Field}), we have that
    \[\begin{aligned}
        |\H^1(\langle z\phi^k\rangle,Q)|&=|\H^1(\langle \phi^k\rangle,Q)|=|\H^1(\langle \phi^k\rangle,W\otimes U^*)|\\
        &=|\H^1(\langle \phi^k\rangle,\underbrace{U^*\oplus \cdots \oplus U^*}_{\mbox{$d$-copies}})|=|\H^1(\langle \phi^k\rangle,U)|^d=1.
    \end{aligned}\]
    Hence $Q{:}\langle z\phi^k\rangle$ has a unique conjugacy class of complements of $Q$.
    It follows that there exists $z'\in Q$ such that $(z\phi^k)^{z'}=\phi^k$.
    By replacing $G$ with $G^{z'}$, we may assume that $\phi^k\in G_{(W)}$ and $\langle\phi^k\rangle G^{(\infty)}$ is a Sylow $p$-subgroup of $G_{(W)}/G^{(\infty)}$.
    Set $x=\phi^k\in G_{(W)}$.
    Then $x\in L=\GL(U)$.
    As $\langle x\rangle\leqslant \GL(U)$ is a $p$-group, it has a non-trivial fixed point $u$ in $U$.
    Then $x\in\GL(U)$ fixes all elements in $\overline{u}=u+W$, as desired.

    \textbf{Step 3.}
    Finally, we prove that $(G^{(\infty)}\cap G_{\overline{v}})^{\overline{v}}\neq 1$ for any $v\in V\setminus W$.

    Recall that $G^{(\infty)}$ acts transitively on non-zero vectors of $V/W$.
    Hence we only need to prove that there exists such a vector $v\in V\setminus W$.
    Let $v\in V\setminus W$ and $x\in G_{(W)}\setminus G^{(\infty)}$ such that $x\in G_{(\overline{v})}$ and $\langle x\rangle G^{(\infty)}$ is a Sylow $p$-subgroup of $G_{(W)}/G^{(\infty)}$ as in Step 2.
    Recall from Step 1 that $(G_{(W)}\cap G_{\overline{v}})^{\overline{v}}$ is a $p$-group.
    It follows that
    \[G^{(\infty)}\cap G_{\overline{v}}\unlhd G_{(W)}\cap G_{\overline{v}}\mbox{ and } (G_{(W)}\cap G_{\overline{v}})^{\overline{v}}=(G^{(\infty)}\cap G_{\overline{v}})^{\overline{v}}\langle x\rangle^{\overline{v}}.\]
    Since $x$ acts trivially on $\overline{v}$, we have that $(G_{(W)}\cap G_{\overline{v}})^{\overline{v}}=(G^{(\infty)}\cap G_{\overline{v}})^{\overline{v}}$ is a non-trivial $p$-group of exponent $p$.
    The proof is complete.
\end{proof}

Note that the group $G_1\cong\GL_3(2)$ defined in Construction~\ref{exam:two} acts on $V=\bbF_2^4$ and stabilizes a $1$-dimensional subspace $W$.
Then $G_1$ satisfies Case~(C) in Lemma~\ref{lem:kernel}.
We are ready to prove that $G_1$ is the unique possibility for Case~(C).

\begin{lemma}\label{lem:inftyTrivial}
    Assume that $G_{(V/W)}=1$ and $G_{(W)}\neq 1$ is almost simple.
    Then $V=\bbF_2^4$ and $G$ is conjugate to $G_1$ defined in Construction~$\ref{exam:two}$.
\end{lemma}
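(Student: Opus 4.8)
The plan is to combine the structural reduction of Lemma~\ref{lem:reduceclass} with a first‑cohomology obstruction and with Lemma~\ref{lem:fieldAuto}, cutting the possibilities down to a short list of candidate groups which can then be finished by hand, the one genuinely computational case being settled in \Magma. Throughout put $S=G^{(\infty)}$. By Lemma~\ref{lem:reduceclass} and its proof, $G\cong G^{V/W}$ satisfies Theorem~\ref{thm:translinear}\,(i), $S$ is simple and normal in $G_{(W)}$, and $S$ acts on $V/W$ as a transitive linear group through its natural module; thus $S\cong\SL_m(p^f)$ with $\gcd(m,p^f-1)=1$, or $S\cong\Sp_{2m}(p^f)$ with $p^f$ even, or $S\cong\G_2(2^f)$ with $f\geqs 2$. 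Since $S\normeq G_{(W)}$, the group $S$ centralizes $W$, so restricting the isomorphism $Q\cong W\otimes(V/W)^*$ of Lemma~\ref{lem:plocal}\,(ii) to $S$ yields $Q|_S\cong\big((V/W)^*\big)^{\oplus\dim W}$, a direct sum of copies of the nontrivial irreducible $S$‑module $(V/W)^*$; in particular $\H^0(S,Q)=0$. As $O_p(G)=1$, Lemma~\ref{lem:conjLevi} forces $\H^1(G,Q)\neq 0$, and then Proposition~\ref{prop:coho}\,(ii) gives $\H^1\big(S,(V/W)^*\big)\neq 0$.

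Next I would apply Lemma~\ref{lem:fieldAuto}: for $v\in V\setminus W$, the group $(S\cap G_{\overline v})^{\overline v}$ is nontrivial. Since $S$ centralizes $W$, every $h\in S_{\overline v}$ acts on $\overline v=v+W$ as translation by the vector $w_h\in W$ with $v^h=v+w_h$, and $h\mapsto w_h$ is a homomorphism $S_{\overline v}\to(W,+)$; hence $(S_{\overline v})^{\overline v}$ is a nontrivial elementary abelian $p$‑quotient of $S_{\overline v}$. As $S$ is transitive on $(V/W)\setminus\{0\}$, the point stabilizer $S_{\overline v}$ is, up to isomorphism, the stabilizer of a nonzero vector of the natural module, namely $[p^{(m-1)f}]{:}\SL_{m-1}(p^f)$, $[p^{(2m-1)f}]{:}\Sp_{2m-2}(p^f)$, or $[2^{5f}]{:}\SL_2(2^f)$ respectively. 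Examining these unipotent radicals shows that $S_{\overline v}$ has a nontrivial abelian $p$‑quotient only when either $S\cong\SL_2(p^f)$ (where $S_{\overline v}$ is abelian) or the Levi factor $\SL_{m-1}(p^f)$, $\Sp_{2m-2}(p^f)$, or $\SL_2(2^f)$ fails to be perfect. Together with the exclusions in Theorem~\ref{thm:translinear}\,(i) and the simplicity of $S$, this leaves exactly $S\in\{\SL_2(2^f):f\geqs 2\}\cup\{\SL_3(2),\SL_3(3),\Sp_6(2)\}$.

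Now $\H^1(\SL_2(2^f),\bbF_{2^f}^2)=0$ and $\H^1(\SL_3(3),M)=0$ for $M$ the natural $\SL_3(3)$‑module and for its dual, by \cite{group2013First}; since $\H^1\big(S,(V/W)^*\big)\neq 0$ this rules out $S\cong\SL_2(2^f)$ and $S\cong\SL_3(3)$, leaving $S\cong\SL_3(2)$ or $S\cong\Sp_6(2)$. In either case $N_{\GL(V/W)}(S)=S$ (the graph automorphism of $\SL_3(2)$ is not induced on its natural module, and $\Out(\Sp_6(2))=1$), so $G=S$; hence $G_{(W)}=G$, $G^W=1$, and as $G^W$ is a transitive linear group on $W$ we get $|W|=p=2$, so $\dim W=1$, $V/W$ is the natural $S$‑module, and $(\dim V,G)\in\{(4,\SL_3(2)),(7,\Sp_6(2))\}$.

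Finally, $Q\cong(V/W)^*$ as $\bbF_2 G$‑modules and $\langle Q,G\rangle=Q{:}G\leqs\P[W]$, and the complements of $Q$ in $Q{:}G$ form $|\H^1(G,Q)|=2$ conjugacy classes, one of which is the class of the Levi complement and has $4$ orbits on $V$, exactly as in Lemma~\ref{prop:two-2}\,(i). A \Magma\ computation of the other class shows that for $G\cong\Sp_6(2)$ it also has at least $4$ orbits, so this case does not occur; while for $G\cong\SL_3(2)$ it has exactly $3$ orbits on $V=\bbF_2^4$. In that case $G\cong\GL_3(2)$ is a subgroup of $\GL_4(2)$ stabilizing the $1$‑dimensional subspace $W$ with $3$ orbits, so by Lemma~\ref{prop:two-2}\,(ii) it is conjugate to one of $G_1$, $G_2$, $L$; it is not conjugate to $L$ (which has $4$ orbits) nor to $G_2$ (which stabilizes no $1$‑dimensional subspace, by Lemma~\ref{prop:two}\,(ii)), so $G$ is conjugate to $G_1$, as claimed. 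The main difficulty is the second step: one needs a uniform look at the point stabilizers, hence the unipotent radicals, of all transitive linear groups to see that only the short list survives, and the $\Sp_6(2)$ case is resilient, surviving every structural reduction and being disposed of only by the final computation.
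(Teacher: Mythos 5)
Your overall strategy is essentially the paper's: reduce via Lemma~\ref{lem:reduceclass}, use Lemma~\ref{lem:fieldAuto} to force $S_{\overline v}$ to have a nontrivial elementary abelian $p$-quotient (hence a non-perfect Levi factor, up to the exclusions and simplicity of $S$), and finish the surviving candidates with cohomology, \Magma, and Lemma~\ref{prop:two-2}. The treatment of $\SL_3(2)$, $\SL_3(3)$, $\Sp_6(2)$ and the exclusion of $\G_2(2^f)$ and of $\Sp_{2m}(q)$ for $2m>6$ all line up with the paper.

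There is, however, a genuine gap at the case $S\cong\SL_2(2^f)$, $f\geqs 2$. You dispose of it by asserting $\H^1(\SL_2(2^f),\bbF_{2^f}^2)=0$, but this is false: for $q=2^f\geqs 4$ the natural module $V$ of $\Sp_2(q)=\SL_2(q)$ has $\H^1(\SL_2(q),V)\cong\bbF_q\neq 0$. A concrete witness is the $3$-dimensional orthogonal module $W_3$ over $\bbF_q$ ($q$ even), which is a non-split extension $0\to k\to W_3\to V\to 0$ fixed pointwise on its radical by $\mathrm{O}_3(q)\cong\SL_2(q)$; were it split, $\SL_2(q)$ would preserve a nondegenerate quadratic form on $V$ and embed in $\mathrm{O}_2^{\pm}(q)$ of order $2(q\mp 1)$, which is impossible for $q\geqs 4$. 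Since $V\cong V^*$, this gives a nonzero class in $\H^1(\SL_2(q),V)$. (This is also consistent with \cite{group2013First} and with the fact that the paper does \emph{not} use a cohomological vanishing here.) So the cohomological obstruction does not eliminate $\SL_2(2^f)$, and your argument leaves this infinite family of candidates open. The paper closes it by a separate, more delicate computation: writing $L=G^{(\infty)}_{\overline v}\cong 2^f$ in an explicit block-matrix form adapted to an $\bbF_{2^f}$-basis of $V/W$ and conjugating by elements of $\bfN_{G^{(\infty)}}(L)\cong\AGL_1(2^f)$, one shows the translation part $\alpha_x$ of every $x\in L$ on $\overline v$ vanishes, so $(G^{(\infty)}\cap G_{\overline v})^{\overline v}=1$, contradicting Lemma~\ref{lem:fieldAuto}. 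Some such argument is indispensable; without it the lemma is not proved. (A minor secondary point: your claim that $\H^1(\SL_3(3),M)=0$ for the natural module and its dual is correct but should be sourced or computed, as the paper does in \Magma.)
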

\begin{proof}
    By Lemma~\ref{lem:reduceclass}, $G\cong G^{V/W}$ lies in the three infinity families satisfying Theorem~\ref{thm:translinear}\,(i).
    Let $T=G^{(\infty)}\cap G_{\overline{v}}$.
    Lemma~\ref{lem:fieldAuto} shows that $T$ has a normal subgroup $M$ such that $T/M$ is a non-trivial $p$-group of exponent $p$.

    Suppose that $G^{(\infty)}\cong \Sp_{2m}(q)$ with $q=p^f$, $\dim V/W=2mf$, $m\geqs 2$ and $(2m,p^f)\neq (4,2)$.
    Then
    \[T\cong P{:}(K\times L)\cong q^{1+2a}_+{:}((q-1)\times \Sp_{2m-2}(q)),\]
    where $P\cong q^{1+2a}_+$ is the \textit{extraspecial $q$-group} defined in~\cite{li2025finite}.
    Note that $L\cong \Sp_{2m-2}(q)$ acts irreducibly on $P/\Phi(P)\cong p^{2af}$ (see~\cite[Section 4.2]{li2025finite} for instance).
    This yields that $[T,P]=P$, and hence $P\normeq T'$.
    Since $T/M$ is a non-trivial $p$-group, we know $(T/M)'=T'M/M<T/M$ and so $T'M<T$. This implies that $T/T'M$ is a non-trivial elementary $p$-group. Note that $T/T'M$ is isomorphic to a quotient of $K\times L\cong (q-1)\times \Sp_{2m-2}(q)$, it follows that $\Sp_{2m-2}(q)$ has a normal subgroup of index $p^k$ for some $k\geqs 1$.
    Then $(2m,q)=(4,3)$ or $(6,2)$.
    Recall that $G^{(\infty)}$ is a simple group.
    Hence we have that $(2m,q)=(6,2)$ and $G=G^{(\infty)}\cong\Sp_6(2)$.
    This yields that $G=G_{(W)}$ as $G_{(W)}$ is almost simple, and hence $\dim V/W=6$ and $\dim W=1$.
    Then $\langle Q,G\rangle$ has matrix form:
    \[Y=\left\{\begin{pmatrix}1&C\\0&A\end{pmatrix}: A\in \Sp_6(2)\mbox{ and }C\in\M_{1\times 6}(\bbF_2)\right\}\]
    Calculations in \Magma\ show that $Y$ has $2$ subgroups isomorphic to $\Sp_6(2)$ and both of them have $4$ orbits on $V$.

    Suppose that $G^{(\infty)}\cong \G_2(2^f)$ with $f\geqs 2$ and $\dim V/W=6f$.
    Then $T=P{:}L\cong [2^{5f}]{:}\SL_{2}(2^f)$.
    By~\cite[Page 99]{gorenstein1998classification}, we have that $L$ acts irreducibly on $P/\Phi(P)$.
    Reasons similar to above yield that $\SL_2(2^f)$ has a subgroup of index $2^k$ for some $k\geqs 1$.
    This is impossible.

    Now we assume that $G^{(\infty)}\cong \SL_m(p^f)$ with $\dim V/W=mf$.
    Then
    \[T=P{:}L\cong p^{(m-1)f}{:}\SL_{m-1}(p^f)\cong \ASL_{m-1}(p^f).\]
    If $m>2$, then $\SL_{m-1}(p^f)$ has a normal subgroup of index $p^k$ for some $k\geqs 1$ with arguments similar to above.
    This yields that $(m,p^f)=(3,2)$ or $(3,3)$.
    \begin{enumerate}
        \item[(1)] If $(m,p^f)=(3,2)$, then $G=G^{(\infty)}\cong \GL_3(2)$.
        Hence $G^{W}=G/G_{(W)}=1$.
        This yields that $\dim V=4$ and $\dim W=1$.
        From Lemma~\ref{prop:two-2}, we deduce that $G$ is conjugate to $G_1$ defined in Construction~\ref{exam:two}.
        \item[(2)] If $(m,p^f)=(3,3)$, then $G^{(\infty)}\cong \SL_3(3)$.
        Note that $G^W$ is non-trivial as $p\neq 2$.
        It follows that $\dim V=4$, $\dim W=1$ and $G\cong\GL_3(3)$.
        By Proposition~\ref{prop:coho}\,(ii) and calculations on \Magma, we have that
        \[|\H^1(G,Q)|\leqslant |\H^1(G^{(\infty)},W\otimes (V/W)^*)|=|\H^1(\SL_3(3),\bbF_3^3)|=1.\]
        Thus $G$ has at least $4$ orbits on $V$ by Lemma~\ref{lem:conjLevi}, a contradiction.
    \end{enumerate}
    Finally, we suppose that $m=2$.
    Then $G^{(\infty)}\cong\SL_2(p^f)$ and $T\cong p^{f}$.
    Since $G^{(\infty)}$ is a simple group, we have that $p=2$, and hence $G^{(\infty)}\cong \SL_2(2^f)$.
    Let $U$ be the natural $\bbF_{2^f}G^{(\infty)}$-module with basis $u_1,u_2$.
    Then $V/W\cong U$ as $\bbF_{2}G^{(\infty)}$-modules, and let $\varphi: U\rightarrow V/W$ be an $\bbF_2 G^{(\infty)}$-isomorphism.
    For a generator $\lambda$ of $\bbF_{2^f}^\times$, we obtain the following basis of $V/W$:
    \[\varphi(u_1),\varphi(\lambda u_1),...,\varphi(\lambda^{f-1}u_1),\varphi(u_2),\varphi(\lambda u_2),...,\varphi(\lambda^{f-1}u_2).\]
    Let $v_{d+1},...,v_n$ be preimages of the above vectors in $V$, respectively.
    Then we can obtain a basis $v_1,...,v_n$ of $V$ by choosing a basis $v_1,...,v_d$ of $W$.
    We may assume that $v=v_{d+1}$.
    Then, with respect to this basis, each element $x\in L=G^{(\infty)}_{\overline{v}}$ has matrix form:
    \[\M(x)=\begin{pmatrix}I & \alpha_x & \beta_x \\ & I & P_x \\ & & I\end{pmatrix}\mbox{ where }\alpha_x,\beta_x\in \mathbb{F}_2^{d\times f}\mbox{ and }P_x\in \mathrm{M}_{f\times f}(\mathbb{F}_2).\]
    Note that there exists some $y\in L$ such that $\varphi(\lambda^k u_2)^y=\varphi(\lambda^k u_1)+\varphi(\lambda^k u_2)$ for $k=0,...,f-1$.
    Hence we have that $\M(y)=\begin{pmatrix}I & \alpha_y & \beta_y \\ & I & I \\ & & I\end{pmatrix}$.
    Note that $|y|=2$ as $L$ is an elementary abelian $2$-group.
    By calculations, we have that $\M(y)^2=\begin{pmatrix}I &  & \alpha_y \\ & I & \\ & & I\end{pmatrix}$ and it follows that $\alpha_y=0$.
    Let $H=\bfN_{G^{(\infty)}}(L)\cong\AGL_1(2^f)$.
    Then, for any $1\neq x\in L$, there exists some $h\in H$ such that $x^h=y$ and
    \[\M(h)=\begin{pmatrix}I & \alpha_h & \beta_h \\ & \Lambda_h & \\ & & \Lambda_h^{-1}\end{pmatrix}\mbox{, where $\Lambda_h\in\GL_f(2)$.}\]
    By calculation, we have that
    \[\M(y)=\M(x^h)=\begin{pmatrix}I & \alpha_x\Lambda_h & \beta_x \Lambda_h^{-1} \\ & I & \Lambda_h^{-1}P_x\Lambda_h^{-1}\\ & & I\end{pmatrix}.\]
    Then $\Lambda_h^{-1}P_x\Lambda_h^{-1}=I$ and $\alpha_x\Lambda_h=\alpha_y=0$.
    This yields that $\alpha_x=0$ as $\Lambda_h$ is invertible.
    Thus we observe that $x$ acts trivially on $\overline{v}=v+W$ for any $x\in L$, a contradiction.

    Therefore, $G$ is conjugate to $G_1$ defined in Construction~\ref{exam:two}.
\end{proof}

We conclude the proof of Theorem~\ref{thm:nonplocal} as below.

\begin{proof}[Proof of Theorem~$\ref{thm:nonplocal}$.]
    Recall that both $G_1$ and $G_2$ defined in Construction~\ref{exam:two} are reducible and have $3$ orbits on $V=\bbF_2^4$ by Lemma~\ref{prop:two}.
    We only need to prove the sufficiency of the theorem.

    Suppose that $G$ is reducible with $O_p(G)=1$ and has $3$ orbits on $V$.
    Assume that $G$ stabilizes a non-trivial subspace $W<V$.
    Then exactly one of $G_{(W)}$ and $G_{(V/W)}$ is trivial by Lemma~\ref{lem:bothfaith}.
    If $G_{(W)}=1$, then $V=\bbF_2^4$ and $G$ is conjugate to $G_2$ in Construction~\ref{exam:two} by Lemmas~\ref{lem:kernel} and~\ref{lem:faithfulW}; and if $G_{(V/W)}=1$, then $V=\bbF_2^4$ and $G$ is conjugate to $G_1$ in Construction~\ref{exam:two} by Lemmas~\ref{lem:kernel} and~\ref{lem:inftyTrivial}.
\end{proof}

\end{document}